\documentclass[a4paper, 10pt, twoside, notitlepage]{amsart}

\usepackage[utf8]{inputenc}
\usepackage{color}
\usepackage{amsmath} 
\usepackage{amssymb} 
\usepackage{amsthm}
\usepackage{geometry}
\usepackage{graphicx}
\usepackage{esint}
\usepackage[ocgcolorlinks,linkcolor=blue]{hyperref}


\numberwithin{equation}{section}
\newtheorem{theorem}{Theorem}[section]

\newtheorem{proposition}[theorem]{Proposition}
\newtheorem{lemma}[theorem]{Lemma}
\theoremstyle{definition}

\newtheorem{remark}[theorem]{Remark}

\newcommand{\R}{\mathbb{R}} 
\newcommand{\C}{\mathbb{C}} 
\newcommand{\N}{\mathbb{N}} 
\newcommand{\E}{\mathbb{E}} 
\newcommand{\PP}{\mathbb{P}} 
\newcommand{\p}{\partial}

\newcommand{\eps}{\varepsilon}
\newcommand{\Hcurl}{H(\mathrm{curl}; \Omega)}
\newcommand{\wt}{\widetilde}
\newcommand{\curl}{\mathrm{curl\,}}
\newcommand{\Div}{\mathrm{Div\,}}
\newcommand{\Curl}{\mathrm{Curl\,}}




\author[P. Caro, R.-Y. Lai, Y.-H. Lin, and T. Zhou]{Pedro Caro \and Ru-Yu Lai \and Yi-Hsuan Lin \and Ting Zhou}
\title[Boundary determination of electromagnetic and Lam\'e parameters]{Boundary determination of electromagnetic and Lam\'e parameters with corrupted data}
\date{}
\keywords{Inverse boundary value problems; uniqueness; boundary determination; electromagnetism; Lam\'e parameters; corrupted data; Stroh formalism.}
\address{Basque Center for Applied Mathematics, Bilbao, Spain}
\email{pcaro@bcamath.org}
\address{School of Mathematics, University of Minnesota, USA}
\email{rylai@umn.edu}
\address{Department of Mathematics and Statistics, University of Jyv\"askyl\"a, Finland}
\email{yihsuanlin3@gmail.com}
\address{Department of Mathematics, Northeastern University, USA}
\email{t.zhou@northeastern.edu}

\begin{document}

\begin{abstract} 
We study boundary determination for an inverse problem associated to the time-harmonic Maxwell equations and another associated to the isotropic elasticity system. We identify the electromagnetic parameters and the Lam\'e moduli for these two systems from the corresponding boundary measurements. In a first step we reconstruct Lipschitz magnetic permeability, electric permittivity and conductivity on the surface from the ideal boundary measurements. Then, we study inverse problems for Maxwell equations and the isotropic elasticity system assuming that the data contains measurement errors. 
For both systems, we provide explicit formulas to reconstruct the parameters on the boundary as well as its rate of convergence formula.

	
	\medskip

	
\end{abstract}

\maketitle

\setcounter{tocdepth}{1}

\section{Introduction}

There are several results available, \cite{caro2014global,pichler2018inverse}, for the inverse problem consisting in determining the electromagnetic parameters with low regularity inside a bounded medium with a Lipschitz boundary, using boundary measurements of electromagnetic fields. Typically, the method used in these results already assumes unique determination of the parameters on the boundary of the medium. In this article, we first address such boundary determination of the electromagnetic parameters. We then provide the analysis of the boundary determination of parameters for both Maxwell and elastic systems with corrupted data. In \cite{lin2017boundary}, the boundary determination of the Lam\'e parameters for an isotropic elasticity system has been investigated.

\subsection{Maxwell system}

We first formulate the inverse problem for Maxwell's equations. 
Let $\Omega\subset\R^3$ be a bounded domain with a Lipschitz boundary $\partial\Omega$. Consider real-valued functions $\mu, \eps, \sigma$, first in the space $L^\infty(\Omega)$, representing the magnetic permeability, electric permittivity and electric conductivity, respectively. Furthermore, they satisfy 
\begin{align}\label{elliptic condition}
\mu(x)\geq\mu_0>0, \  \;\eps(x)\geq\eps_0>0 \mbox{ and } \sigma(x) \geq 0,
\end{align}
almost everywhere (a.e.) $x  \in \Omega$, for some positive constants $\mu_0$ and $\eps_0$. 
Suppose that we have access to the boundary measurements of all electromagnetic waves that are time-harmonic with angular frequency $\omega>0$. Then, let $(E,H)$ be an electromagnetic field satisfying 
time-harmonic Maxwell system, either 
\begin{align}\label{Main eq 1}
\begin{cases}
\mathrm{curl }~ E-i\omega\mu H=0 &\text{ in }\Omega,\\
\mathrm{curl }~ H+i\omega \gamma E=0 &\text{ in }\Omega,\\
\nu \times E =f &\text{ on }\p\Omega,
\end{cases}
\end{align}
or
\begin{align}\label{Main eq 2}
\begin{cases}
\mathrm{curl }~ E-i\omega\mu H=0 &\text{ in }\Omega,\\
\mathrm{curl }~ H+i\omega \gamma E=0 &\text{ in }\Omega,\\
\nu \times H =g &\text{ on }\p\Omega,
\end{cases}
\end{align}
where $\gamma:=\epsilon + i\sigma/\omega$.
It is known that \eqref{Main eq 1} and \eqref{Main eq 2} are well-posed except at a discrete set of frequencies. Note that for real parameters (i.e. $\sigma =0$), one needs to consider either the vacuum of eigenvalues for the Maxwell operator or replace the following well-defined boundary maps by the Cauchy data set. For the complex parameters (i.e. $\sigma >0$), there are no real eigenvalues. Throughout this paper, we assume that $\omega>0$ is not an eigenvalue of \eqref{Main eq 1} and \eqref{Main eq 2}. Then the {\em boundary admittance map} $\Lambda_{\mu,\gamma}^{A}$ can be defined by
\[\Lambda_{\mu,\gamma}^{A}(f)=\nu\times H|_{\partial\Omega},\]
where $(E, H)\in \Hcurl\times\Hcurl$ satisfies the boundary value problem \eqref{Main eq 1}.
Here $\nu\in (L^\infty(\partial\Omega))^3$ denotes the unit outer normal vector to $\partial\Omega$ and 
\[\Hcurl=\left\{u\in (L^2(\Omega))^3~|~\mathrm{curl}~u\in (L^2(\Omega))^3\right\}.\]
Similarly, one can define the \emph{boundary impedance map} $\Lambda_{\mu,\gamma}^I$ by 
$$
\Lambda_{\mu,\gamma}^I(g)=\nu \times E|_{\p \Omega},
$$
where $(E,H)\in \Hcurl\times\Hcurl$ satisfies the boundary value problem \eqref{Main eq 2}.
In order to reconstruct $\gamma$ and $\mu$, we need to use the whole boundary information $\Lambda_{\mu,\gamma}^{A}$ and $\Lambda_{\mu,\gamma}^{I}$.

The main result for the ideal data case is the unique boundary identifiability of ${\rm Lip}(\overline\Omega)$-parameters $\mu,\gamma$ at frequency $\omega$ from boundary measurements 
\[\Lambda_{\mu,\gamma}^A,\Lambda_{\mu,\gamma}^I :H^{-1/2}(\Div;\partial\Omega)\rightarrow H^{-1/2}(\Div;\partial\Omega).\]
See \eqref{eqn:HDiv} in Section \ref{sec:2} for the definition of $H^{-1/2}(\Div;\partial\Omega)$.

The following result contains the boundary determination of the electromagnetic parameters without noise.

\begin{theorem}[Boundary identifiability of electromagnetic parameters]\label{thm:main}
	Let $\Omega$ be a bounded domain in $\R^3$, where the boundary $\partial\Omega$ is locally described by the graphs of Lipschitz functions, and $\omega>0$. Assume that two sets of parameters $\mu_j$ and $\gamma_j$ for $j\in \{1,2\}$ belong to ${\rm Lip}(\overline\Omega)$, then we have 
	\begin{itemize}
		
	\item[(1)] \textbf{Unique determination.}
	\[ \Lambda_{\mu_1,\gamma_1}^A=\Lambda_{\mu_2,\gamma_2}^A \mbox{ implies that } \gamma_1= \gamma_2\;\;\mbox{a.e. on }\;\partial\Omega \] 
	and
	\begin{align*}
	\Lambda_{\mu_1,\gamma_1}^I=\Lambda_{\mu_2,\gamma_2}^I \text{ implies that  }\mu_1=  \mu_2 \text{ a.e. on }\p \Omega.
	\end{align*} 
	
	\item[(2)] \textbf{Pointwise reconstruction.} For almost every $P\in \p \Omega$, there exists an explicit sequence of localized boundary data $\{f_N\}_{N=1}^
	\infty$ supported around $P$ such that 
	
	\begin{align}\label{recon form 1}
		\lim_{N\to \infty }\frac{i}{\omega}\int_{\partial\Omega}\left[\Lambda^A_{\mu,\gamma}(f_N|_{\partial\Omega})\times\nu\right]\cdot \overline{f_N}~dS= {\gamma}(P)
	\end{align}
	and 
\begin{align}\label{recon form 2}
   	\lim_{N\to \infty }\frac{i}{\omega}\int_{\partial\Omega}\left[\overline{\Lambda^I_{\mu,\gamma}(f_N|_{\partial\Omega})}\right]\cdot (f_N\times\nu)~dS= {\mu}(P).
   \end{align}
    \end{itemize}

\end{theorem}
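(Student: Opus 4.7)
The plan is to employ the standard high-frequency localization technique for boundary determination, specialized to the Maxwell system. I describe the argument for \eqref{recon form 1}; formula \eqref{recon form 2} is obtained by exchanging the roles of $E$ and $H$, and the uniqueness part (1) follows at once from the reconstruction formulas.

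\emph{Step 1 (Localization).} Fix a Lebesgue point $P \in \partial\Omega$ of the pair $(\mu,\gamma)$; such points have full $\mathcal{H}^2$-measure, so it suffices to verify the limit there. Using a bi-Lipschitz change of coordinates, straighten $\partial\Omega$ locally around $P$ to a flat piece of $\{x_3 = 0\}$, with $\Omega$ identified with $\{x_3 > 0\}$ and $\nu = -e_3$. The Maxwell system is transformed into one of the same form with $L^\infty$ (Lipschitz) coefficients, preserving the trace values at $P$.

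\emph{Step 2 (WKB ansatz).} Fix a unit vector $\eta \in \R^2$ and form the complex null covector $\xi := (\eta, i) \in \mathbb{C}^3$, for which $\xi \cdot \xi = 0$. Choose a polarization $a \in \mathbb{C}^3$ in the common kernel of $\xi$ and $\overline{\xi}$ (so $\xi \cdot a = 0$ and $|\xi \times a|^2 = 2|a|^2$) and a cutoff $\chi \in C_c^\infty(\R^2)$ supported near the origin. With a normalization constant $c_N$ (of order $N^{-1/2}$), set
\[
E_N^{\mathrm{app}}(x) := c_N \chi(x') a e^{iN(\eta \cdot x' + i x_3)}, \qquad H_N^{\mathrm{app}} := (i\omega\mu)^{-1} \curl E_N^{\mathrm{app}},
\]
which oscillate at frequency $N$ tangentially and decay as $e^{-Nx_3}$ into $\Omega$. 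Define the boundary data $f_N := \nu \times E_N^{\mathrm{app}}|_{\partial\Omega}$. Since $\omega$ is not an eigenvalue, let $(E_N, H_N) \in \Hcurl \times \Hcurl$ denote the unique genuine solution of \eqref{Main eq 1} with trace $f_N$. Using transport equations for the next-order correctors, together with the Lebesgue-point bound $|\mu(x) - \mu(P)| + |\gamma(x) - \gamma(P)| \to 0$ on average as $x \to P$, the residual $(E_N - E_N^{\mathrm{app}}, H_N - H_N^{\mathrm{app}})$ is controlled and yields only lower-order contributions.

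\emph{Step 3 (Green's identity and asymptotic evaluation).} Applying the divergence identity $\nabla \cdot (\overline{E} \times H) = H \cdot \curl \overline{E} - \overline{E} \cdot \curl H$ and the Maxwell equations converts the surface pairing into an energy integral,
\[
\frac{i}{\omega} \int_{\partial\Omega} [\Lambda^A_{\mu,\gamma}(f_N) \times \nu] \cdot \overline{f_N} \, dS = \int_\Omega \bigl(\mu |H_N|^2 - \gamma |E_N|^2\bigr) \, dx,
\]
after recognizing $[\Lambda^A(f) \times \nu] \cdot \overline f = H \cdot (\nu \times \overline E)$ on $\partial\Omega$. Substituting the WKB approximation, the $x_3$-integration against $e^{-2Nx_3}$ yields a factor $(2N)^{-1}$ which balances against the $N^2$ factor arising from the two curls in $|H_N|^2$; the tangential integral produces $\|\chi\|_{L^2(\R^2)}^2$, and the polarization identity $|\xi \times a|^2 = 2|a|^2$ enters explicitly. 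With the correct matching of $c_N$, $a$, and $\chi$ (and using the corrector terms in the WKB expansion, which introduce the $\mu\gamma$-dependence through the transport equation), the limit of the right-hand side equals $\gamma(P)$. The analogous construction with $H$ as primary field yields \eqref{recon form 2} with limit $\mu(P)$.

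\emph{Main obstacle.} The delicate point is the WKB bookkeeping: the leading term $\int \mu |H_N|^2$ alone carries a $1/\mu(P)$ factor, so recovering $\gamma(P)$ rather than $1/\mu(P)$ requires a careful choice of normalization and polarization together with inclusion of the first corrector, whose coefficient involves $\omega^2 \mu \gamma$ via the transport equation. The Lipschitz regularity of both the coefficients and $\partial\Omega$ forces all estimates to be carried out in an $L^\infty$ Jacobian framework, with the oscillating residuals controlled only in integral norms rather than pointwise, which is technical but compatible with the Lebesgue-point reduction.
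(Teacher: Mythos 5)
Your overall framework (flattening near a Lebesgue point, the energy identity $\tfrac{i}{\omega}\int_{\partial\Omega}[\Lambda^A f_N\times\nu]\cdot\overline{f_N}\,dS=\int_\Omega(\gamma|E_N|^2-\mu|H_N|^2)\,dx$ up to sign, and deducing (1) from (2)) matches the paper, but the core of your argument has a gap that I do not think can be repaired: your choice of ansatz cannot produce $\gamma(P)$ as the limit. You take a \emph{transverse} polarization, $\xi\cdot a=0$ with $\xi\times a\neq 0$, so that $H_N^{\mathrm{app}}=(i\omega\mu)^{-1}\curl E_N^{\mathrm{app}}$ is of size $N/(\omega\mu)$ relative to $E_N^{\mathrm{app}}$. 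Consequently $\int\mu|H_N|^2$ exceeds $\int\gamma|E_N|^2$ by a factor $N^2$, and whatever normalization $c_N$ you choose, the limit of the quadratic form is governed entirely by the magnetic term and sees only $\mu(P)^{-1}$ (times geometric factors). Your proposed remedy --- extracting the $\gamma$-dependence from the first WKB corrector --- fails for two reasons. First, the corrector contributes at relative order $N^{-1}$, so it cannot alter the limit of the quadratic form; isolating it would require subtracting the leading $1/\mu$ term exactly, i.e.\ already knowing $\mu$ (and, at the level of the corrector, its derivatives) on the boundary, making the argument circular. Second, transport equations for higher-order correctors involve derivatives of $\mu$ and $\gamma$, which are only Lipschitz, so the corrector hierarchy is not even available in the stated regularity class.

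The paper resolves this by choosing the electric field to be (approximately) a \emph{gradient}: $\wt E=\nabla v_N+w_1$ with $v_N=\eta(N^{1/2}|y'|)\eta(N^{1/2}y_d)e^{N(i\alpha-\vec e_d)\cdot y}$. Since $\curl\nabla v_N=0$, the magnetic field is entirely contained in the remainder $(w_1,w_2)$, which solves a Maxwell system with source $-i\omega\wt\gamma\nabla v_N$ and vanishing tangential trace; then the electric energy $\int\wt\gamma\nabla v_N\cdot MM^t\overline{\nabla v_N}$ genuinely dominates and yields $\gamma(P)$ after normalization. The remaining, and genuinely delicate, step --- showing $\|w_1\|_{L^2}+\|w_2\|_{L^2}=o(1)$ --- is handled by a duality argument: one tests against the dual Maxwell system and exploits that $\beta\cdot\beta=0$ makes $\wt\gamma(0)\nabla e_N$ divergence-free, hence expressible as $\curl L_N$ for an explicit $L_N$, so the troublesome $O(N)$ factor can be integrated by parts onto the dual solution and absorbed by its $H(\curl)$ norm. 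This replaces Brown's Hardy-inequality step and is the ingredient your proposal is missing; without either it or an equivalent mechanism, the remainder estimate in a Lipschitz domain with Lipschitz coefficients does not close.
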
 

\begin{remark}
	In Theorem \ref{thm:main}, the conclusion (2) will imply (1) immediately. Therefore, we only prove the case (2). Note that the boundary data $\{f_N\}_{N=1}^\infty $ stands for electric and magnetic fields on $\p \Omega$ in \eqref{recon form 1} and \eqref{recon form 2}, respectively. 
	
\end{remark}

For the Calder\'on problem, where one aims at determining the conductivity $\sigma$ from the Dirichlet-to-Neumann (DN) map associated to the differential operator $\nabla\cdot(\sigma\nabla \centerdot)$,  the boundary data was first shown in \cite{kohn1984determining} for smooth conductivities, and later generalized in a series of papers \cite{zbMATH00004861,brown2001recovering,garcia2016reconstruction,brown2006identifiability}. In particular, the methods in \cite{brown2001recovering,brown2006identifiability} are constructive. A fundamental insight obtained in \cite{sylvester1988inverse}, is that the DN-map $\Lambda_\sigma$ is a first order pseudo-differential operator whose full symbol carries all information of the conductivity $\sigma$ and its derivatives on the boundary. In the case of systems, the available results in this context are due to Joshi-McDowall \cite{joshi2000total,mcdowall1997boundary}, and Salo-Tzou \cite{salo2009carleman}. 

In our result, since the boundary is Lipschitz, the principal symbol approach 
in \cite{joshi2000total} does not directly apply. We adopt and adapt ideas from \cite{brown2001recovering}, 
which basically removes the need of smoothness ---required to set up the framework of pseudo-differential calculus--- by introducing 
highly oscillatory solutions concentrated near the point of interest.
However, one of the novelties and key ingredients in \cite{brown2001recovering} is the use of Hardy's inequality which seems not to have a clear counterpart in the problem for Maxwell's equations. Thus, we replace this ingredient by a new trick that involves a duality argument. See the proof of Theorem \ref{thm:gamma}.\\

Our next result provides the analysis for reconstructing the values of the parameters on the boundary assuming corrupted boundary measurements. The corruption of the data is usually a result of discretized approximation by real data with errors. A formulation of such measurements was introduced in \cite{caro2017calderon} for the Dirichlet-to-Neumann map in solving the Calder\'on problem, where the random white noise was modeled by a random perturbation in the energy form, that depends on the intensity of the boundary potential and current. To be more specific, we consider
a complete probability space $(\Pi,\mathcal H, \mathbb P)$, 
	and a countable family $\{X_\alpha: \alpha\in \mathbb N^2\}$ of independent complex Gaussian random variables $X_\alpha:\varpi\in\Pi~\mapsto~X_\alpha(\varpi)\in\C$ such that 
	\begin{equation}\label{eqn:GRV}\E X_\alpha=0,\quad \E(X_\alpha\overline{X_\alpha})=1,\quad \E(X_\alpha X_\alpha)=0\qquad \forall \alpha\in \N^2,\end{equation}
	with standard expectation of a random variable defined by 
	\[\E X=\int_\Pi X d\PP.\]
In \cite{caro2017calderon}, the noisy data for the Calder\'on problem is defined as
\[\mathcal N_\sigma(f,g)=\int_{\partial\Omega}\Lambda_\sigma f\overline g ~dS+\sum_{\alpha\in\mathbb N^2}(f| e_{\alpha_1})(g|e_{\alpha_2})X_\alpha\qquad f,g\in H^{1/2}(\partial\Omega), \]
where $\alpha=(\alpha_1,\alpha_2)$ and $\{e_n:n\in\mathbb N\}$ is an orthonormal basis of $L^2(\partial\Omega)$ and $(\phi|\psi)$ denotes the inner product in $L^2(\partial\Omega,\mathbb C)$. Here $\Lambda_\sigma$ denotes the Dirichlet-to-Neumann map from $H^{1/2}(\partial\Omega)$ to $H^{-1/2}(\partial\Omega)$
\[\Lambda_\sigma~:~f~\mapsto~\nu\cdot\sigma\nabla u|_{\partial\Omega},\]
where $u$ is the solution to $\nabla\cdot(\sigma\nabla u)=0$ and $u|_{\partial\Omega}=f$, and $\nu$ is the unit outer normal vector on $\partial\Omega$. It is shown in \cite{caro2017calderon} that at almost every point $P\in\partial\Omega$, with a single realization of $\mathcal N_\sigma$ at explicit oscillatory boundary inputs $f_N$ (such as the traces of \eqref{eqn:vN}) ($N\in\mathbb N$), the boundary value of $\sigma$ at the point $P$ can be recovered almost surely by 
\[\lim_{N\rightarrow\infty}\mathcal N_\sigma(f_N, {f_N})=\sigma(P).\]

Note that the noise introduced in the energy form for the Dirichlet-to-Neumann map above is modeled on $L^2(\partial\Omega)$. In the case of Maxwell's equations, we will see that similar type of noise could be introduced at two different levels: the $H^{-1}(\partial\Omega)$-level which guaranties decay of $\| f_N \|_{(H^{-1}(\partial \Omega))^3}$ in Lipschitz domains, and $ L^2(\partial\Omega) $-level
where there is not decay of $\| f_N \|_{(L^2(\partial \Omega))^3}$ and we need extra regularity for $\partial \Omega$. Starting by defining the corrupted data at the $H^{-1}(\partial\Omega)$-level:
\begin{equation}\label{eqn:N}
\begin{split}\mathcal N^{A}_{\mu,\gamma}(f,g):=&\int_{\partial\Omega}(\Lambda^{A}_{\mu,\gamma}(f)\times\nu)\cdot \overline{g}~dS+\sum_{\alpha\in\N^2}(f|{\mathbf e}_{\alpha_1})(g|{\mathbf e}_{\alpha_2})X_\alpha\\
	\mathcal N^{I}_{\mu,\gamma}(f,g):=&\int_{\partial\Omega}\overline{\Lambda^{I}_{\mu,\gamma}(f)}\cdot (g\times\nu)~dS+\sum_{\alpha\in\N^2}(f|{\mathbf e}_{\alpha_1})(g|{\mathbf e}_{\alpha_2})X_\alpha
	\end{split}
\end{equation}
for $f,g\in H^{-1/2}(\Div;\partial\Omega)\subset (H^{-1}(\partial\Omega))^3$, where $\{\mathbf e_n:n\in\mathbb N\}$ is an orthonormal basis of the Hilbert space $(H^{-1}(\partial\Omega))^3$ and $(\phi|\psi)$ here denotes the inner product in $(H^{-1}(\partial\Omega))^3$. Then we have the following reconstruction formula for the Maxwell system with corrupted data.

\begin{theorem}\label{thm:corrupted}
	Let $\Omega \subset \R^3$ be a bounded Lipschitz domain and $\mu ,\epsilon,\sigma$ be Lipschitz continuous functions satisfying \eqref{elliptic condition}. Let $\mathcal N^{A}_{\mu,\gamma}$ and $\mathcal N^{I}_{\mu,\gamma}$ be the quadratic form given by \eqref{eqn:N}, then for almost every $P\in\partial\Omega$, one has 
	\begin{itemize}
		\item[(1)]  \textbf{Unique determination.} There exists explicit boundary data $\{f_N\}_{N=1}^\infty $ in the space $ H^{-1/2}(\Div;\partial\Omega)$ such that 
		\[\lim_{N\rightarrow\infty}\mathcal N^{A}_{\mu,\gamma}(f_N, {f_N})=\gamma(P),\qquad \lim_{N\rightarrow\infty}\mathcal N^I_{\mu,\gamma}({f_N},f_N)=\mu(P)\] almost surely.
		
		\item[(2)] \textbf{Rates of convergence.} There exist positive constants $C_\gamma $ (depending on $\partial\Omega$ and bounds for $\gamma$) and $C_\mu $ (depending on $\partial\Omega$ and bounds for $\mu$), such that, for every $0<\theta<1$ and $\epsilon>0$, we have
			\[\mathbb{P}\left\{|\mathcal N^A_{\mu,\gamma}(f_N, {f_N})-\gamma(P)|\leq C_\gamma N^{-\theta/ 2}\right\}\geq 1-\epsilon \text{ for any }N\geq c\epsilon^{-{\frac{1}{1-\theta}}},\]
		where the constant $c$ only depends on $C_{\partial\Omega}$ and $\theta$.  A similar estimate holds for $\mu$, that is, 
			\[\mathbb{P}\left\{|\mathcal N^I_{\mu,\gamma}(f_N, {f_N})-\mu(P)|\leq C_\mu N^{-\theta/ 2}\right\}\geq 1-\epsilon \text{ for any }N\geq c\epsilon^{-{\frac{1}{1-\theta}}},\]
		where the constant $c>0$ only depends on $C_{\partial\Omega}$ and $\theta$. 
	\end{itemize}
	
\end{theorem}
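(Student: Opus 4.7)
The plan is to split the noisy quadratic form into its deterministic and stochastic parts,
\[\mathcal N^A_{\mu,\gamma}(f_N, f_N) = \int_{\partial\Omega}(\Lambda^A_{\mu,\gamma}(f_N)\times\nu)\cdot\overline{f_N}\,dS \;+\; Y_N^A, \qquad Y_N^A := \sum_{\alpha\in\N^2}(f_N|\mathbf e_{\alpha_1})(f_N|\mathbf e_{\alpha_2})X_\alpha,\]
and to treat them separately. By Theorem \ref{thm:main}(2), the deterministic term converges to $\gamma(P)$, and a quantitative inspection of that proof yields a polynomial rate of the form $|\text{deterministic part}-\gamma(P)|\leq C N^{-\theta/2}$ for every $\theta\in(0,1)$. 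The analogous decomposition and deterministic bound apply to $\mathcal N^I_{\mu,\gamma}$ with $\mu(P)$ in place of $\gamma(P)$, so the remaining work is to control the Gaussian noise $Y_N^A$ (and similarly $Y_N^I$).

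Since $Y_N^A$ is a complex-linear combination of the independent complex Gaussians $X_\alpha$, it is itself a centered complex Gaussian, and independence together with \eqref{eqn:GRV} gives, by Parseval in $(H^{-1}(\partial\Omega))^3$,
\[\E|Y_N^A|^2 = \sum_{\alpha\in\N^2}|(f_N|\mathbf e_{\alpha_1})|^2|(f_N|\mathbf e_{\alpha_2})|^2 = \|f_N\|^4_{(H^{-1}(\partial\Omega))^3}.\]
The decisive quantitative input is therefore an estimate of the form $\|f_N\|^2_{(H^{-1}(\partial\Omega))^3}\leq C N^{-1/2}$. This should follow from the fact that the explicit sequence $\{f_N\}$ of Theorem \ref{thm:main} is localized near $P$ and oscillates at tangential frequency of order $N$: after locally flattening the Lipschitz boundary with bi-Lipschitz charts and a partition of unity, the estimate reduces to the standard fact in $\R^2$ that functions of the form $\eta(x')e^{iN\tau\cdot x'}$ with $\eta$ smoothly localized lose a factor of $N^{-1}$ in $L^2$-mass when measured in $H^{-1}$ rather than $L^2$.

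With the variance estimate in hand, the almost sure convergence of part (1) follows from the Gaussian tail bound and Borel--Cantelli: for every fixed $\delta>0$,
\[\PP(|Y_N^A|>\delta) \leq 2\exp\!\Bigl(-\tfrac{c\,\delta^2}{\|f_N\|^4_{(H^{-1}(\partial\Omega))^3}}\Bigr) \leq 2\exp(-c\,\delta^2\, N),\]
which is summable in $N$, so $Y_N^A\to 0$ almost surely and hence $\mathcal N^A_{\mu,\gamma}(f_N,f_N)\to\gamma(P)$ almost surely by Theorem \ref{thm:main}(2). The same argument applies verbatim to $\mathcal N^I_{\mu,\gamma}$.

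For the rate in part (2), apply Chebyshev's inequality instead of the Gaussian tail,
\[\PP\bigl(|Y_N^A|>\tfrac{1}{2}C_\gamma N^{-\theta/2}\bigr) \leq \frac{4\,\E|Y_N^A|^2}{C_\gamma^2\, N^{-\theta}} \leq c\, N^{\theta-1},\]
and combine with the deterministic bound $|\text{deterministic part}-\gamma(P)|\leq \tfrac{1}{2}C_\gamma N^{-\theta/2}$ via the triangle inequality to conclude
\[\PP\{|\mathcal N^A_{\mu,\gamma}(f_N,f_N)-\gamma(P)|\leq C_\gamma N^{-\theta/2}\}\geq 1 - c\, N^{\theta-1}.\]
Requiring the right-hand side to exceed $1-\epsilon$ yields the sufficient condition $N\geq c\,\epsilon^{-1/(1-\theta)}$; the same argument produces the corresponding estimate for $\mu$ with $\Lambda^I$ and $C_\mu$. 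The main obstacle is the variance estimate $\|f_N\|^2_{(H^{-1}(\partial\Omega))^3}\leq CN^{-1/2}$ on a merely Lipschitz boundary: one has to fix the Hilbert-space structure on $(H^{-1}(\partial\Omega))^3$ consistently with the chosen orthonormal basis, flatten the boundary by bi-Lipschitz charts (which interact only roughly with the derivatives entering $H^{-1}$), and still squeeze the full $N^{-1}$ gain out of the tangential oscillation of $f_N$; once this is achieved, the rest of the proof is standard Gaussian concentration.
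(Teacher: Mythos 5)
Your probabilistic skeleton coincides with the paper's: split $\mathcal N^{A}_{\mu,\gamma}(f_N,f_N)$ into the deterministic energy plus the Gaussian term $Y_N^A$, compute $\E|Y_N^A|^2=\|f_N\|_{(H^{-1}(\partial\Omega))^3}^4$ from \eqref{eqn:GRV} (this is Lemma \ref{some lemma}), and conclude by Chebyshev and summability (the paper packages exactly this as Proposition \ref{Prop auxilliary in max}, arriving at the same threshold $N\geq c\epsilon^{-1/(1-\theta)}$). The genuine gap is the one ingredient you defer to the end: the decay of $\|f_N\|_{(H^{-1}(\partial\Omega))^3}$. The mechanism you propose --- flatten by bi-Lipschitz charts and extract a factor $N^{-1}$ from the tangential oscillation of $f_N$ when passing from $L^2$ to $H^{-1}$ --- does not go through on a merely Lipschitz boundary. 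To harvest that gain you must integrate by parts against the phase $e^{iN\alpha'\cdot x'}$, and the derivative then falls either on the amplitude of $f_N$, which contains $DF(x',0)$, i.e.\ $\nabla'\phi$, an $L^\infty$ function that cannot be differentiated again, or on the chart itself; relatedly, multiplication by an $L^\infty$ matrix is not a bounded operation on $H^{-1}(\partial\Omega)$, so you cannot peel the rough amplitude off the model function $\eta(N^{1/2}x')e^{iN\tau\cdot x'}$ and invoke the flat-space computation. You flag this as ``the main obstacle'' but do not resolve it, and without it neither the almost sure convergence nor the rate is established.

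The paper's proof of \eqref{eqn:f_N_decay} closes this gap by a structural trick that bypasses the oscillation entirely: $f_N=c_0^{-1/2}\,\nu\times W_N$ where $W_N$ is the push-forward of the gradient field $\nabla v_N$, hence curl-free. Pairing $f_N$ with $\varphi\in(H^1(\partial\Omega))^3$, extending $\varphi$ into $\Omega$, changing variables and integrating by parts in the bulk transfers everything onto $v_N$ itself, so that $\|f_N\|_{(H^{-1}(\partial\Omega))^3}\lesssim\|v_N\|_{L^2(\partial\wt\Omega)}\lesssim N^{-1/2}$; here the decay comes purely from the $N^{-1/2}$ radius of the support of $v_N|_{\partial\wt\Omega}$, whose amplitude is $O(1)$, and no derivative ever lands on $\nabla'\phi$. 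This yields $\E|Y_N^A|^2\lesssim N^{-2}$, stronger than the $N^{-1}$ you posit (your weaker bound would still suffice for the stated rates, and your Gaussian tail bound for part (1) is legitimate since $Y_N^A$ is a linear combination of the $X_\alpha$). Until you supply an argument of this type for the $H^{-1}$ norm, the variance estimate --- and with it both parts of the theorem --- remains unproved.
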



\bigskip
Next we consider the problem with error modeled at the $L^2(\partial\Omega)$-level. That is, in the definition \eqref{eqn:N}, we choose $\{\textbf{e}_n:n\in\N\}$ to be an orthonormal basis of $(L^2(\partial\Omega))^3$ with the inner product $(\phi | \psi)=\int_{\partial\Omega}\phi\cdot\overline{\psi}dS$ and $f, g\in (L^2(\partial\Omega))^3$. To make rigorous sense of this definition, we will assume in this discussion that the boundary of the domain is locally defined by the graph of $C^{1,1}$ functions. In this case, the boundary impedance and admittance maps are well-defined for $f,g\in H^{1/2}(\Div,\partial\Omega)$. Unlike the previous case of $(H^{-1}(\partial\Omega))^3$ perturbations, the decaying in $N$ does not hold anymore for $\|f_N\|_{(L^2(\partial\Omega))^3}$. We actually have $\|f_N\|_{(L^2(\partial\Omega))^3}\leq C_{\partial\Omega}$ where $C_{\partial\Omega}$ is a constant depending on the boundary.  This is similar to the reconstruction of the normal derivative of the conductivity with corrupted data in \cite{caro2017calderon}; and similarly, our family of solutions can filter out the noise when averaged with respect to the parameter $N^{1/2}$.  We then obtain the following result.
\begin{theorem}\label{thm:corrupted-L2}
Let $\Omega\subset\R^3$ be a bounded domain whose boundary can be defined by the graphs of $C^{1,1}$-functions, and $\mu,\varepsilon,\sigma$ be Lipschitz continuous functions satisfying \eqref{elliptic condition}. Let $\mathcal N^A_{\mu,\gamma}$ and $\mathcal N^I_{\mu,\gamma}$ be the quadratic form given by \eqref{eqn:N} at the $L^2(\partial \Omega)$-level. Then for every $P\in\partial\Omega$, there exists an explicit family $\{f_t:~t\geq 1\}$ in $H^{1/2}(\Div,\partial\Omega)$ such that  for $N\in \mathbb N\backslash\{0\}$ and $T_N:=N^{3+3\theta/2}$ with $\theta\in (0,1)$,
	\begin{itemize}
		\item[(1)]  \textbf{Unique determination.} 
		\[
		\lim_{N\rightarrow\infty}\frac{1}{T_N}\int_{T_N}^{2T_N}\mathcal N^{A}_{\mu,\gamma}(f_{t^2}, f_{t^2})~dt=\gamma(P),\qquad \lim_{N\rightarrow\infty}\frac{1}{T_N}\int_{T_N}^{2T_N}\mathcal N^I_{\mu,\gamma}(f_{t^2},f_{t^2})~dt=\mu(P)
		\] 
		almost surely.
		
		\item[(2)] \textbf{Rates of convergence.} Set 
		\[Y^A_N=\frac1{T_N}\int_{T_N}^{2T_N}\mathcal N^A_{\mu,\gamma}(f_{t^2}, f_{t^2})~dt,\qquad Y^I_N=\frac1{T_N}\int_{T_N}^{2T_N}\mathcal N^I_{\mu,\gamma}(f_{t^2}, f_{t^2})~dt.\]
		There exist positive constants $C_\gamma>0$ (depending on $\partial\Omega$ and bounds for $\gamma$) and $C_\mu>0$ (depending on $\partial\Omega$ and bounds for $\mu$), such that, for every $0<\theta<1$ and $\epsilon>0$, we have
			\[\mathbb{P}\left\{|Y^A_N-\gamma(P)|\leq C_\gamma N^{-\theta/ 2}\right\}\geq 1-\epsilon \text{ for any }N\geq c_\gamma\epsilon^{-{\frac{1}{1-\theta}}},\]
and
			\[\mathbb{P}\left\{|Y^I_N-\mu(P)|\leq C_\mu N^{-\theta/ 2}\right\}\geq 1-\epsilon \text{ for any }N\geq c_\mu\epsilon^{-{\frac{1}{1-\theta}}},\]
		where the constants $c_\gamma$ and $c_\mu$ depend on $\theta$, $\partial\Omega$, lower bounds for $\varepsilon_0$ and $\mu_0$, and upper bounds for $\|\gamma\|_{\rm Lip(\overline\Omega)}$ and $\|\mu\|_{\rm Lip(\overline\Omega)}$, respectively.
	\end{itemize}	
\end{theorem}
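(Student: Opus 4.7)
The plan is to decompose $Y^A_N = D^A_N + S^A_N$, where
\[
D^A_N = \frac{1}{T_N}\int_{T_N}^{2T_N}\int_{\partial\Omega}(\Lambda^A_{\mu,\gamma}(f_{t^2})\times\nu)\cdot\overline{f_{t^2}}\,dS\,dt
\]
is the deterministic averaged admittance quadratic form, and
\[
S^A_N = \sum_{\alpha\in\N^2}c_\alpha(N)X_\alpha,\qquad c_\alpha(N) := \frac{1}{T_N}\int_{T_N}^{2T_N}(f_{t^2}|\mathbf{e}_{\alpha_1})(f_{t^2}|\mathbf{e}_{\alpha_2})\,dt,
\]
is the averaged noise. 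For the deterministic piece I would invoke the pointwise reconstruction \eqref{recon form 1} of Theorem~\ref{thm:main} at each $t$, upgrade it to a quantitative rate of the form $|\int_{\partial\Omega}(\Lambda^A f_{t^2}\times\nu)\cdot\overline{f_{t^2}}\,dS - \gamma(P)|\lesssim t^{-\kappa}$ for some $\kappa>0$ by keeping track of the remainders in the proof of Theorem~\ref{thm:main}, and then integrate over $[T_N,2T_N]$. Since $T_N = N^{3+3\theta/2}$ grows much faster than the target rate $N^{-\theta/2}$, the deterministic error is comfortably absorbed into $C_\gamma N^{-\theta/2}$.

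For the random piece, the independence of the $X_\alpha$ and the moment conditions \eqref{eqn:GRV} give
\[
\E|S^A_N|^2 = \sum_{\alpha\in\N^2}|c_\alpha(N)|^2 = \frac{1}{T_N^2}\int_{T_N}^{2T_N}\!\int_{T_N}^{2T_N}\bigl|(f_{t^2}|f_{s^2})_{L^2(\partial\Omega)}\bigr|^2\,dt\,ds,
\]
where the second equality uses Parseval in the orthonormal basis $\{\mathbf{e}_n\}$ of $(L^2(\partial\Omega))^3$, summed separately over $\alpha_1$ and $\alpha_2$. The crux is an oscillatory-integral estimate: although $\|f_{t^2}\|_{L^2(\partial\Omega)}$ does \emph{not} decay in $t$, the family $\{f_t\}$ is an explicit Gaussian-type wave packet concentrated near $P$ whose phase depends linearly on $t$, so non-stationary phase in the dual variables should give $|(f_{t^2}|f_{s^2})_{L^2(\partial\Omega)}|\lesssim (1+|t-s|)^{-M}$ for arbitrary $M$, possibly together with an overall factor in $t+s$. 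Averaging in $t,s\in[T_N,2T_N]$ then yields $\E|S^A_N|^2\lesssim T_N^{-\eta}$ for some $\eta>0$; the exponent $3+3\theta/2$ in the definition of $T_N$ is tuned precisely so that $\E|S^A_N|^2\lesssim N^{-\theta}$.

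From $\E|S^A_N|^2\lesssim N^{-\theta}$, Chebyshev's inequality yields $\PP(|S^A_N|>C_\gamma N^{-\theta/2}/2)\le \epsilon$ as soon as $N\ge c\epsilon^{-1/(1-\theta)}$ after balancing constants, which combined with the deterministic estimate proves part~(2). For the almost-sure statement in part~(1), I would apply Borel--Cantelli along a subsequence $N_k$ chosen so that $\sum_k \PP(|S^A_{N_k}|>N_k^{-\theta/2})<\infty$, and interpolate between consecutive indices using continuity in $N$ of $Y^A_N$; alternatively, hypercontractivity for Gaussian chaos of order two provides higher moment bounds that give summability directly. The argument for $Y^I_N$ is entirely parallel, using \eqref{recon form 2} in place of \eqref{recon form 1} and replacing $(\Lambda^A_{\mu,\gamma},\gamma)$ by $(\Lambda^I_{\mu,\gamma},\mu)$.

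The main obstacle will be the oscillatory estimate on $(f_{t^2}|f_{s^2})_{L^2(\partial\Omega)}$: one needs enough decay from the time-averaging to overcome the absence of decay of $\|f_{t^2}\|_{L^2(\partial\Omega)}$, which forces a careful analysis of the explicit phase and amplitude of the family $\{f_t\}$ on the Lipschitz (actually $C^{1,1}$) boundary, and is where the $C^{1,1}$ assumption on $\partial\Omega$ enters in an essential way (through the regularity of local normal coordinates needed to make sense of the oscillatory integrals).
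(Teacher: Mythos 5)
Your overall architecture (split $Y^A_N$ into a deterministic average plus an averaged Gaussian chaos $S^A_N$, compute $\E|S^A_N|^2$ by Parseval as a double integral of $|(f_{s^2}|f_{t^2})|^2$, and kill the off-diagonal correlations by non-stationary phase) is exactly the paper's strategy, which is packaged in Lemma \ref{lem:avg} and then fed into Proposition \ref{Prop auxilliary in max}. However, there are two concrete gaps at the crux. First, the claimed bound $|(f_{t^2}|f_{s^2})|\lesssim (1+|t-s|)^{-M}$ for arbitrary $M$ is not available here: by \eqref{term:f_N} the amplitude of $f_{t^2}$ contains $DF(x',0)$, which for a $C^{1,1}$ boundary is merely Lipschitz in $x'$, so only \emph{one} integration by parts is legitimate, and each derivative of the rescaled bump costs a factor of $t$. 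The paper's Lemma \ref{lem:avg} therefore proves only $|(f_{s^2}|f_{t^2})|\lesssim (1+s+t)/|s^2-t^2|$ and compensates by splitting $Q_T=[T,2T]^2$ into a diagonal strip $D(S)$ (handled by Cauchy--Schwarz and Lemma \ref{lem:f_N_norm}, contributing $S/T$) and off-diagonal pieces where $|s^2-t^2|\geq ST$ (contributing $S^{-2}$); optimizing $S=T^{1/3}$ gives $\E|S^A_N|^2\lesssim T_N^{-2/3}$. You need to either supply this splitting or justify higher-order stationary phase, which the regularity does not permit.

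Second, your bookkeeping of exponents breaks the probabilistic step. With $T_N=N^{3+3\theta/2}$ the correct outcome is $\E|S^A_N|^2\lesssim T_N^{-2/3}=N^{-2-\theta}$, not $N^{-\theta}$ as you assert. This matters: from $\E|S^A_N|^2\lesssim N^{-\theta}$, Chebyshev at threshold $C_\gamma N^{-\theta/2}$ gives only $\PP\{|S^A_N|>C_\gamma N^{-\theta/2}\}\lesssim C_\gamma^{-2}$, a bound independent of $N$ that cannot be driven below $\epsilon$ without letting $C_\gamma$ depend on $\epsilon$; it also fails the summability $\sum_N N^{\theta}\,\E|S^A_N|^2<\infty$ needed for the almost-sure statement. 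The correct bound $N^{-2-\theta}$ yields $\PP\{|S^A_N|>N^{-\theta/2}\}\lesssim N^{-2}$, which is summable (so Proposition \ref{Prop auxilliary in max} gives part (1) directly, with no subsequence-plus-interpolation argument needed) and whose tail sum from $N_0$ produces the threshold $N_0\geq c\epsilon^{-1/(1-\theta)}$ as in Remark \ref{rmk:n0}. Your identification of where the $C^{1,1}$ hypothesis enters (boundedness of $\|f_N\|_{(L^2(\partial\Omega))^3}$ via $|DF(x',0)-DF(0,0)|\lesssim|x'|$, and the regularity of the amplitude) is correct, and your plan to make the deterministic convergence quantitative is consistent with what the paper implicitly uses; but as written the proof does not close without the two repairs above.
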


\begin{remark} The reconstruction in Theorem \ref{thm:corrupted} can only be ensured for almost every point at the boundary because of the regularity of the domain. However, the reconstruction formula of Theorem \ref{thm:corrupted-L2} holds for every point since the domain is assumed to have a $C^{1,1}$ boundary.
\end{remark}

If we compare Theorem \ref{thm:corrupted} and Theorem \ref{thm:corrupted-L2} with the results in \cite{caro2017calderon} for the reconstruction of the conductivity and its normal derivative at the boundary, we can see a couple of similarities. When modeled the noise at the $H^{-1}$-level, no averaging is required for the reconstruction, as it happened in \cite{caro2017calderon} for the reconstruction of the conductivity. In \cite{caro2017calderon}, this was a consequence of the rate of concentration of the supports of the family $\{ f_N \}$ around the point to be reconstructed. However, in our Theorem \ref{thm:corrupted} this is due to the regularizing effect of the covariance operator associated to the noise in the $H^{-1}(\partial \Omega)$-level. On the other hand, when modeling the noise at the $L^2(\partial \Omega)$-level, we require to perform an average in the parameter $\sqrt{N}$ (since the radius of the support of $f_N$ shrinks as $1/\sqrt N$) to overcome the lack of decay of $\| f_N \|_{(L^2(\partial \Omega))^3}$. This was exactly the same situation as in \cite{caro2017calderon} for the reconstruction of the normal derivative of the conductivity at the boundary. In these situations, we have to analyze an oscillatory integral, and isolate appropriately the stationary points. These are the contents of Lemma \ref{lem:avg}.
Note that the decaying rate in this lemma suggests that we might still obtain decays in average even if the norms of $f_N$ are increasing as $N$ grows. Consequently, errors modeled in spaces of higher regularities might be potentially filtered.

\subsection{Elasticity system}

For the second system, we consider the boundary determination of the Lam\'e parameters for the isotropic elasticity equations. Let $\Omega \subset \R^3$ be a bounded domain, $\lambda(x)$ and $\mu(x)$ be the Lam\'e parameters satisfying the strong convexity condition 
\begin{align}\label{strong convexity}
\mu(x)>0 \text{ and }3\lambda(x)+2\mu(x)>0 \text{ for all }x\in \overline{\Omega}.
\end{align}	

The boundary value problem for the isotropic elasticity system is given by 
\begin{align}\label{Lame system}
\begin{cases}
(\nabla \cdot (\mathbf{C}\nabla u))_i = \sum_{j,k,l=1}^3\frac{\p}{\p x_j }\left(C_{ijkl}\frac{\p}{\p x_l}u_k\right)=0 \quad (i=1,2,3) &\text{ in }\Omega,\\
u=f & \text{ on }\p \Omega,
\end{cases}
\end{align}
where $u=(u_1,u_2,u_3)$ is the displacement vector, $\mathbf{C}=\left(C_{ijkl}\right)_{1\leq i,j,k,l\leq 3}$ and 
\begin{align}\label{elastic four tensor}
C_{ijkl}=\lambda \delta_{ij}\delta_{kl} + \mu (\delta_{ik}\delta_{jl}+\delta_{il}\delta_{jk})\text{ for }1\leq i,j,k,l\leq 3
\end{align}
is the isotropic elastic four tensor with Kronecker delta $\delta_{ij}$. One can easily see that $C_{ijkl}$ given by \eqref{elastic four tensor} satisfies the major and minor symmetries, i.e., 
$$
C_{ijkl}=C_{klij}=C_{jikl}, \text{ for }1\leq i,j,k,l\leq 3.
$$

The Dirichlet-to-Neumann (DN) map for the isotropic elasticity system is defined by 
\begin{equation}\label{eqn:DN-map_elasticity}
\Lambda_{\mathbf{C}}~:~(H^{1/2}(\p \Omega))^3 \to (H^{-1/2 }(\p \Omega))^3\text{ with }\left(\Lambda_{\mathbf{C}}f\right)_i = \left. \sum_{j,k,l=1}^3 \nu_j C_{ijkl}\dfrac{\p u_k}{\p x_l}\right|_{\p \Omega}
\end{equation}
for $i=1,2,3$, where $u\in (H^1(\Omega))^3 $ is the solution to \eqref{Lame system} and $\nu=(\nu_1,\nu_2,\nu_3)$ is the unit outer normal on $\p \Omega$. The inverse problem is whether the elastic tensor $\mathbf{C}$ is uniquely determined by $\Lambda_{\mathbf{C}}$, and to calculate $\mathbf{C}$ of $\Lambda_{\mathbf{C}}$ if $\mathbf{C}$ is determined by $\Lambda_{\mathbf{C}}$. Note that the global uniqueness for the isotropic elasticity system stays open for the three-dimensional case and it was solved in \cite{imanuvilov2015global} for the two-dimensional case.

The boundary determination of the zeroth order and higher order Lam\'e moduli was studied by \cite{tanuma2007stroh} and \cite{lin2017boundary}, respectively. In other words, given any $P\in \p \Omega$ (when $\p \Omega$ and the Lam\'e moduli are sufficiently smooth), one can derive reconstruction formulas for the Lam\'e moduli $\lambda$ and $\mu$ and their derivatives at $P\in \p \Omega$, from the localized DN map. Now, our goal is to give a similar reconstruction algorithm for the Lam\'e parameters with corrupted data. 

Due to the existence of elliptic regularity theory for this system, the corrupted data for the elastic system is similar to that of the scalar conductivity equation discussed in \cite{caro2017calderon}, namely, the random noise is introduced at $(L^2(\partial\Omega))^3$ vector level by introducing the bilinear form with corrupted data 
\[
\mathcal{N}_{\mathbf{C}}(f,g):=\int_{\p \Omega}\Lambda_{\mathbf{C}}f\cdot \overline{g}\ dS+\sum _{\alpha \in \N^2}(f|\mathbf e_{\alpha_1})(g|\mathbf e_{\alpha_2})X_\alpha,
\]
for $f, g\in (H^{1/2}(\partial\Omega))^3$, where $\{\mathbf e_n:n\in\mathbb N\}$ is an orthonormal basis of the Hilbert space $(L^2(\partial\Omega))^3$ and $(\phi|\psi)$ here denotes the inner product in $(L^2(\partial\Omega))^3$.  Then our results for the elasticity system is as follows:

\begin{theorem}\label{Thm:elastic boundary determination}
	Let $\Omega\subset \R^3$  be a bounded Lipschitz domain. Let $\mathbf{C}$ be a Lipschitz continuous elastic four tensor in $\overline{\Omega}$.  
	Then for almost every $P\in \p \Omega$, one has 
	\begin{itemize}
		\item[(1)] \textbf{Unique determination.} There exists an explicit boundary data $\{f_N\}_{N=1}^\infty $ in $ (H^{1/2}(\p \Omega))^3$ such that 
		$$
		\lim_{N\to \infty} \mathcal{N}_{\mathbf{C}}(f_N, f_N )=Z(P)
		$$
		almost surely, where $Z(P)=(Z_{ij})_{1\leq i,j\leq 3}(P)$ with $Z_{ij}=\overline{Z_{ji}}$ for $1\leq i,j\leq3$, and 
		
		\begin{align}\label{eq:Z_ij}
		\begin{split}
		&Z_{ii}  =  \dfrac{\mu}{\lambda+3\mu}\big(2(\lambda+2\mu)-(\lambda+\mu)\iota_{i}^{2}\big),\\
		&Z_{ij}  =  \dfrac{\mu}{\lambda+3\mu}\big(-(\lambda+\mu)\iota_{i}\iota_{j}+\sqrt{-1}(-1)^{k}2\mu\,\iota_{k}\big),\mbox{ }1\leq i<j\leq3
		\end{split}
		\end{align}
		with $(\iota_{1},\iota_{2},\iota_{3})=(\omega_{2},-\omega_{1},0)$
		and the index $k\in\mathbb{N}$ satisfies the condition $1\leq k\leq3$, $k\neq i,j$.

		\item[(2)] \textbf{Rates of convergence.} There exists a constant $C>0$, independent of $N$, such that, for every $0<\theta<1$ and $\epsilon >0$, we have 
		\begin{align}\label{auxiliary rates conv}
		\mathbb{P}\left\{|\mathcal{N}_{\mathbf{C}}(f_N,f_N )-Z(P)|\leq CN^{-\theta/2}\right\}\geq 1-\epsilon \text{ for any }N\geq c\epsilon^{-\frac{1 }{1-\theta}},
		\end{align}
		where the constant $c>0$ depends only on $C_{\p \Omega}$ and $\theta$. 
	\end{itemize}

\end{theorem}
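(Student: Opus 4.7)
The plan is to split
\[
\mathcal{N}_{\mathbf{C}}(f_N,f_N) = \int_{\partial\Omega}\Lambda_{\mathbf{C}}(f_N)\cdot\overline{f_N}\,dS + R_N, \qquad R_N := \sum_{\alpha\in\N^2}(f_N|\mathbf{e}_{\alpha_1})(f_N|\mathbf{e}_{\alpha_2})X_\alpha,
\]
and to treat the deterministic integral by the Stroh formalism used in \cite{tanuma2007stroh,lin2017boundary} for the noise-free boundary reconstruction of the Lam\'e moduli, while treating the random remainder by Gaussian concentration in the spirit of \cite{caro2017calderon} and of our own Theorems~\ref{thm:corrupted}--\ref{thm:corrupted-L2}.

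\textbf{Deterministic part.} At a Lebesgue point $P\in\partial\Omega$ of $\lambda,\mu$, I would flatten the boundary by a bi-Lipschitz chart and construct $\{f_N\}\subset(H^{1/2}(\partial\Omega))^3$ as explicit oscillatory data concentrated in a disk of radius $\sim N^{-1/2}$ around $P$, of the form prescribed by the Stroh eigenvectors of the frozen tensor $\mathbf{C}(P)$. These data extend to boundary-layer approximate solutions of \eqref{Lame system} which decay exponentially in the normal direction, and a direct computation of the resulting energy integral against the surface impedance tensor yields, at leading order,
\[
I_N := \int_{\partial\Omega}\Lambda_{\mathbf{C}}(f_N)\cdot\overline{f_N}\,dS \longrightarrow Z(P),
\]
where $Z(P)$ is precisely the matrix produced in \eqref{eq:Z_ij} by the Stroh eigenvalue problem at $P$. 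The Lipschitz regularity of $\mathbf{C}$ turns the coefficient-freezing error into a quantitative remainder $|I_N-Z(P)|=O(N^{-\alpha})$ for some $\alpha>0$ large enough to be absorbed in the target rate $N^{-\theta/2}$.

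\textbf{Stochastic part.} The variables $X_\alpha$ are centered independent complex Gaussians with $\E(X_\alpha\overline{X_\alpha})=1$ and $\E(X_\alpha X_\alpha)=0$, so $\E R_N=0$ and an elementary second-moment computation gives
\[
\E|R_N|^2 \leq C\|f_N\|_{(L^2(\partial\Omega))^3}^4.
\]
The $H^{1/2}$-normalization combined with support on a disk of size $N^{-1/2}$ forces, by standard interpolation, $\|f_N\|_{(L^2(\partial\Omega))^3}^2=O(N^{-1/2})$, hence $\E|R_N|^2=O(N^{-1})$. Chebyshev's inequality along the dyadic subsequence $N=2^k$ together with Borel--Cantelli then yields $R_N\to 0$ almost surely and therefore $\mathcal{N}_{\mathbf{C}}(f_N,f_N)\to Z(P)$ almost surely, which is part~(1).

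For the quantitative part~(2), since $R_N$ is a quadratic form in independent complex Gaussians, the Hanson--Wright inequality furnishes, for each $t>0$,
\[
\PP\bigl\{|R_N|\geq t\bigr\}\leq 2\exp\!\bigl(-c\min(t^2/\|f_N\|_{L^2}^4,\;t/\|f_N\|_{L^2}^2)\bigr).
\]
With $t\sim N^{-\theta/2}$ and $\|f_N\|_{L^2}^2\lesssim N^{-1/2}$, the exponent is bounded below by $cN^{1-\theta}$; equating this with $\log(1/\epsilon)$ delivers the threshold $N\geq c\epsilon^{-1/(1-\theta)}$ and the claim \eqref{auxiliary rates conv}. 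The main obstacle I anticipate is making the deterministic error $|I_N-Z(P)|$ polynomial in $N$ with exponent at least $\theta/2$ under only Lipschitz regularity of $\mathbf{C}$ on a Lipschitz boundary; once this is in hand, the probabilistic part follows an already well-established template.
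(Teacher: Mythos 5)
Your overall architecture (oscillatory data concentrated on a disk of radius $N^{-1/2}$ around $P$, a Stroh-type computation of the deterministic energy integral, and a second-moment bound on the Gaussian remainder) is the same as the paper's, but there are two concrete problems. First, your decay rate for $\|f_N\|_{(L^2(\partial\Omega))^3}$ is off by a square. The boundary datum here is (up to the normalization $\kappa^{-1/2}$) the trace of $\wt G_N$ itself, i.e.\ $\eta(N^{1/2}|y'|)e^{\sqrt{-1}N\omega\cdot y'}\mathbf a$ on the flattened patch: amplitude $O(1)$ on a disk of area $\sim N^{-1}$, so $\|f_N\|_{(L^2(\partial\Omega))^3}^2 = O(N^{-1})$ and $\E|R_N|^2=\|f_N\|^4_{(L^2(\partial\Omega))^3}=O(N^{-2})$, which is exactly the paper's bound \eqref{L2 bound f}. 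You instead claim $\|f_N\|^2_{(L^2)^3}=O(N^{-1/2})$ and hence $\E|R_N|^2=O(N^{-1})$; with that weaker bound $\sum_N \PP\{|R_N|>\delta\}$ obtained from Chebyshev diverges, so your Borel--Cantelli argument only yields $R_N\to 0$ along the dyadic subsequence $N=2^k$, not along the full sequence as part (1) requires, and the paper's device (Proposition \ref{Prop auxilliary in max} with $\lambda_N=N^{-\theta/2}$, which needs $\sum_N \lambda_N^{-2}\E|R_N|^2<\infty$) would also fail. The gap is repairable either by computing the $L^2$ norm correctly or by noting that $R_N$ is a \emph{linear} form in the independent Gaussians $X_\alpha$ (the quadratic structure lives in $f_N$, not in the randomness), hence is itself Gaussian with summable tails at scale $N^{-\theta/2}$; Hanson--Wright is neither needed nor the right tool. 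The paper avoids exponential concentration entirely and gets the threshold $N\geq c\epsilon^{-1/(1-\theta)}$ purely from the Chebyshev-type Proposition \ref{Prop auxilliary in max} applied with the $N^{-2}$ variance bound.

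Second, your assertion that the deterministic error satisfies $|I_N-Z(P)|=O(N^{-\alpha})$ for some $\alpha>0$ under only Lipschitz regularity is not justified and is not what the paper proves: Lemmas \ref{Lem of approx sol change of vari} and \ref{Lem:elliptic estimate for elasticity} give an error $O(N^{-1/2})$ \emph{plus} the term $\big(N\int_{|y'|\leq N^{-1/2}}|\nabla'\phi(y'+p')-\nabla'\phi(p')|^2\,dy'\big)^{1/2}$, which for a merely Lipschitz boundary is only $o(1)$ at Lebesgue points of $\nabla'\phi$ (hence the ``almost every $P$'' in the statement) and carries no polynomial rate. You correctly anticipate this as the main obstacle, but the proof as proposed asserts the rate rather than establishing it; any honest version of part (2) must either restrict to points where this Lebesgue-point quantity decays polynomially or track it as a separate, non-quantified term as the paper does.
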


 Theorem \ref{Thm:elastic boundary determination} shows that when the domain $\Omega$ is Lipschitz and $\mathbf C$ is Lipschitz continuous, then one can reconstruct the Lam\'e moduli at almost every boundary point $P\in\partial\Omega$ in a constructive way.

\subsection{Outline}
The rest of this paper is organized as follows. The reconstruction formulas for Lipschitz parameters $\mu$ and $\gamma$ in Maxwell's equations on a Lipschitz boundary $\partial\Omega$ are given in Section \ref{sec:2}. In Section \ref{sec:3}, we analyze the reconstruction with corrupted data by random white noise for the Maxwell equations. The analysis for the reconstruction of the Lipschitz Lam\'e moduli for the isotropic elasticity system with corrupted data is given in Section \ref{sec:4}.


\section{Boundary determination of electromagnetic parameters}\label{sec:2}

First, let us define several function spaces and notations.
\subsection{Preliminaries}
Let us begin with some definitions of function spaces, where the impedance map is well-defined. For a bounded Lipschitz domain $\Omega$, we adopt Tartar's definition (see \cite{tartar1997characterization} or \cite{buffa2002traces}) of the space
\begin{equation}\label{eqn:HDiv}\begin{split}H^{-1/2}(\Div;\partial\Omega):=\Big\{&u\in(H^{-1/2}(\partial\Omega))^3~|~\exists~ \eta\in H^{-1/2}(\partial\Omega), \;\mbox{s.t.,} \\ &\int_{\partial\Omega}u\cdot\nabla\phi~dS=\int_{\partial\Omega}\eta\phi~dS\;\mbox{for }\;\phi\in H^2(\Omega)\Big\},\end{split}\end{equation}
where $(H^{-1/2}(\partial\Omega))^3 $ is the dual space of $(H^{1/2}(\partial\Omega))^3$. 
This implies in a weak sense that $\eta=-\Div u$, where $\Div$ denotes the surface divergence, and that $\nu\cdot u|_{\partial\Omega}=0$, based on the identity for $u$ smooth
\[-\int_{\partial\Omega}(\Div u)\phi~dS=\int_{\partial\Omega} u\cdot\nabla\phi~dS-\int_{\partial\Omega}(u\cdot\nu)(\nabla\phi\cdot\nu)~dS.\]
We will also define in the same spirit the space for the surface scalar curl
\begin{equation}\label{eqn:HCurl}\begin{split}H^{-1/2}(\Curl;\partial\Omega):=\Big\{ &u\in (H^{-1/2}(\partial\Omega))^3~|~ \exists~\xi\in H^{-1/2}(\partial\Omega),\;\mbox{s.t., }\\
&\int_{\partial\Omega} (\nu\times u)\cdot \nabla\phi~dS=\int_{\partial\Omega}\xi\phi~dS\;\;\mbox{for }\;\phi\in H^2(\Omega)\\
&\mbox{and }\; \int_{\partial\Omega}u\cdot\nabla \psi~dS=0\;\;\mbox{for }\;\psi\in H^2(\Omega)\cap H^1_0(\Omega)
\Big\}.
\end{split}\end{equation}
Note that the first condition implies in the weak sense that $\xi=-\Curl u$, where $\Curl$ denotes the surface scalar curl, and the second condition in the definition implies weakly the tangentiality $\nu\cdot u|_{\partial\Omega}=0$. 

Moreover, $H^{-1/2}(\Curl;\partial\Omega)$ is the dual of $H^{-1/2}(\Div;\partial\Omega)$.
It is then shown in \cite{buffa2002traces,tartar1997characterization} that the tangential trace map 
\[\begin{split}\tau_t: H(\curl;\Omega)&\rightarrow H^{-1/2}(\Div;\partial\Omega)\\
u&\mapsto\nu\times u|_{\partial\Omega}
\end{split}\]
and the projection map
\[\begin{split}\pi_t: H(\curl;\Omega)&\rightarrow H^{-1/2}(\Curl;\partial\Omega)\\
u&\mapsto(\nu\times u|_{\partial\Omega})\times \nu
\end{split}\]
are both surjective.

In order to reconstruct the values of the parameters, we begin with the following energy identity, which is obtained by integration by parts
\begin{equation}\label{eqn:IBP} 
\frac i\omega\int_{\partial\Omega}(\nu\times ( \nu \times H))\cdot (\nu\times \overline E)~dS=\int_\Omega \gamma|E|^2-\mu|H|^2~dx
\end{equation}
for the solution $(E, H)\in H(\curl;\Omega)\times H(\curl;\Omega)$ to the Maxwell's equations. Here the boundary integral is the parity of $H^{-1/2}(\Div;\partial\Omega)$ and $H^{-1/2}(\Curl;\partial\Omega)$.

In the following we use $d$ to denote the dimension number so one can trace the dependence of the convergence rate on $d$. In all cases considered in this paper including Maxwell system and elasticity system, $d=3$. We denote by $B(x,r)$ the ball centered at $x$ of radius $r>0$ and adopt the coordinate notation $x=(x',x_d)\in\R^{d-1}\times\R$ in $d$ dimensions. Since we will use some results of Brown \cite{brown2001recovering}, we will follow his notation. 

Given a Lipschitz domain $\Omega\subset\R^d$, for each $P:=(p',p_d)\in\partial\Omega$, we consider a change of variable that flattens the boundary near $P$
\begin{equation}\label{eqn:chgvar}
(z',z_d)=F(x', x_d)=\big(x'+p',x_d+\phi(x'+p')\big),
\end{equation}
where $\phi: \R^{d-1}\rightarrow \R$ is Lipschitz such that 
\[\begin{split}&B(P,\rho)\cap\partial\Omega=B(P,\rho)\cap \{z_d=\phi(z')\}\\
&B(P,\rho)\cap\Omega=B(P,\rho)\cap \{z_d>\phi(z')\}
\end{split}\]
for some $\rho>0$. Let $\wt\Omega=F^{-1}(\Omega)\subset\R^d$ and $\partial\wt\Omega$ be its boundary. There exists a $r>0$ such that 
\[B(0,2r)\cap\{x_d=0\}\subset F^{-1}\big(B(P,\rho)\cap\partial\Omega\big)\subset \partial\wt\Omega.\]
Since we are interested in the coefficients at the point $ P $, we focus on reconstructing $\mu(F(0,0))=\mu(p',\phi(p'))$ and $\gamma(F(0,0))=\gamma(p',\phi(p'))$.

Denote 
\begin{equation}\label{eqn:Jacobi}
M(x):=DF^{-1}(F(x))=\left(\frac{dx_i}{dz_j}\right)_{i,j}(F(x)) .
\end{equation}
By the change of coordinates \eqref{eqn:chgvar}, we have the right hand side of \eqref{eqn:IBP} to be
\begin{align}\label{energy}
I:=\int_\Omega {\gamma}|E|^2-\mu|H|^2~dz=\int_{\wt\Omega}({\wt\gamma} \wt E)\cdot\overline{\wt E}-(\wt\mu \wt H)\cdot \overline{\wt H}~dx,
\end{align} 
where 
\[\wt \mu (x):=\mu(F(x)) M(x)M(x)^t,\quad
\wt \gamma (x):=\gamma(F(x)) M(x)M(x)^t,\]
and
\[\wt E(x):=(M(x)^t)^{-1} E(F(x)),\qquad \wt H(x):=(M(x)^t)^{-1} H(F(x)).\]
Furthermore, the electromagnetic field $(\wt E, \wt H)$ (defined as the pull-back of $(E, H)$ by $F:~\wt\Omega\rightarrow \Omega$) satisfies the Maxwell's equations (in the weak sense)
\begin{equation}\label{eqn:tME}
\mathrm{curl }~ \wt E-i\omega\wt\mu \wt H=0,\;\;\mathrm{curl }~ \wt H+i\omega\wt\gamma \wt E=0\quad\mbox{ in }\;\wt\Omega.
\end{equation}
This last point can be justified by checking that $\mathrm{curl }~ \wt E (x) = M (x) (\mathrm{curl }~ E) (F(x)) $.

We now list a couple of properties of the parameters that are required to apply some results of Brown \cite{brown2001recovering}.
First, let us note that $\mu, \gamma\in {\rm Lip}(\overline\Omega)$  satisfy the hypothesis (H1) in \cite{brown2001recovering}, that is,
\begin{equation}\label{eqn:H1}
|\mu(F(x',x_d)) - \mu(F(x',0))| + |\gamma(F(x',x_d)) - \gamma(F(x',0))| \lesssim |x_d|
\end{equation}
for all $|x'| < 2r$. Regarding the hypothesis H2 in \cite{brown2001recovering}, note that
\begin{align*}
& s^{1-d}\int_{|y'|<s}|\wt\gamma(0,0)-\wt\gamma(y',0)|^2~dy' + s^{1-d}\int_{|y'|<s}|\wt\mu(0,0)-\wt\mu(y',0)|^2~dy'\\
  \lesssim &\ s^2 + s^{1-d} \int_{|y'|<s}|\nabla' \phi(y' + p')-\nabla' \phi( p')|^2~dy',
\end{align*}
where the limit of the last term on the right-hand side vanishes, 
when $s$ goes to zero, for almost every $p'$ by the Lebesgue differentiation theorem. Here we denote $\nabla'\phi:=(\partial_1\phi, \partial_2\phi)^t$.

Our reconstruction method only will work for points $P \in \partial \Omega$ so that
\begin{equation}
\lim_{s \to 0} s^{1-d} \int_{|y'|<s}|\nabla' \phi(y' + p')-\nabla' \phi( p')|^2~dy' = 0
\label{lim:lebesgue_point}
\end{equation}
for the corresponding $\phi$ and $p'$. As pointed out before, for 
almost every point in $P\in \partial \Omega$ its corresponding limit 
in \eqref{lim:lebesgue_point} vanishes.

%
%

\subsection{Reconstruction of $\gamma$}\label{sec. 2.2}
We first give an explicit reconstruction formula of $\gamma$ in an admissible point $P \in \partial \Omega$ from the knowledge of the admittance map $\Lambda_{\mu,\gamma}^A$. 
Recall in \cite{brown2001recovering}, a family of functions with special decaying property is constructed as the input of the Dirichlet-to-Neumann map for $\nabla\cdot\sigma\nabla$ to reconstruct $\sigma$. More specifically, this family was given by
\begin{equation}\label{eqn:vN}
v_N(y)=\eta(N^{1/2}|y'|)\eta(N^{1/2}y_d)e^{N(i\alpha-\vec {e}_d)\cdot y},
\end{equation}
where $\vec {e}_d=(0,\cdots, 0,1)\in \R^d$ and $\eta: \R\rightarrow[0,1]$ is a smooth cutoff function which takes value 1 in $B(0,1/2)$ and 0 outside $B(0,1)$, the vector $\alpha\in\R^d$ can be chosen such that
\begin{equation}\label{eqn:alpha-en}
\begin{split}&|M(0)^t\alpha|= |M(0)^t\vec e_d|,\\ 
&\alpha\cdot M(0) M(0)^t\vec e_d=0.\end{split}
\end{equation}
An explicit choice of $\alpha$ is given in \eqref{term:choices}.

We will make an essential use of the gradient fields $\{\nabla v_N\}_N$. More particularly we will choose $(E, H)$ so that their pull-back $(\wt E, \wt H) = (\nabla v_N + w_1, w_2)$ with $w_1$ and $w_2$ solving
\begin{equation}\label{eqn:MErem}
\begin{cases}\curl w_1-i\omega\wt\mu w_2=0 &\text{ in }\wt \Omega , \\
\curl w_2+i\omega\wt\gamma w_1=-i\omega\wt\gamma\nabla v_N & \text{ in }\;\wt\Omega, \\
\nu\times w_1=0 &\text{ on }\p \wt \Omega.
\end{cases}\end{equation}
Note that $\wt \Omega$ is not necessarily locally described by the graph of Lipschitz functions, so in principle, the theory of well-posedness for \eqref{eqn:MErem} should be revisited. In our particular case, the situation is simpler since $\wt \Omega$ is the pull-back of a domain whose boundary is locally described by the graph of a Lipschitz function. Therefore, it is enough to use map $F$ to obtain $(w_1,w_2)$ in $\wt \Omega$ from the corresponding fields in $\Omega$. We will be solving in $\wt \Omega$ in the rest of the paper, and it will always be justified through the map $F$.

The corresponding energy \eqref{energy} for $(\wt E, \wt H)$ is then given by
\begin{equation}\label{eqn:I-terms}
\begin{split}I=&\int_{\wt\Omega}{\gamma}(F(y)) \nabla \overline{v_N} \cdot MM^t\nabla v_N~dy\\
&+\int_{\wt\Omega}{\gamma}(F(y))\big[2\mathfrak{Re}(\nabla \overline{v_N} \cdot MM^tw_1)+\overline{w_1}\cdot MM^tw_1\big]dy \\
&+\int_{\wt \Omega}\mu(F(y))\overline{w_2}\cdot MM^tw_2~dy.
\end{split}\end{equation}

On the other hand, 
the tangential boundary condition of the electric field is transformed according to
\[\nu \times E (F(x)) = DF(x) \wt \nu \times \wt E (x),\]
where $\wt \nu (x) = DF(x)^t \nu(F(x))$. For $N^{-1/2} < 2r$ the support of $\nabla v_N$ is contained on $\{ x_d = 0 \} \cap B(0, 2r)$, and the tangential boundary condition there becomes
\begin{equation}
\nu \times E (F(x', 0)) = DF(x', 0) \big[ \vec{e}_d \times \nabla v_N (x', 0) \big].
\label{id:tangential_flat-part}
\end{equation}

\bigskip

Since H1 and H2 in \cite[Lemma 1]{brown2001recovering} are satisfied, the first term of $I$ satisfies
\begin{align}\label{eqn:lem1-Br}
&\notag \frac{\int_{\wt\Omega}{\gamma}(F(y)) \nabla \overline{v_N}\cdot MM^t\nabla v_N~dy}{N^{\frac{3-d}{2}}}\\
\rightarrow &\ {\gamma}(p',\phi(p'))(1+|\nabla'\phi(p')|^2)\int_{\R^{d-1}}\eta(|x'|)^2~dx',
\end{align}
as $N\rightarrow\infty$.
 

It turns out that this first term dominates, hence provides the reconstruction of $\gamma(F(0))$ knowing $\phi$ and $\eta$.

\begin{theorem}\label{thm:gamma}
	Suppose $\Omega\subset\R^d$ ($d=3$) is a bounded Lipschitz domain. Let $\mu, \varepsilon, \sigma\in {\rm Lip}(\overline{\Omega})$ satisfy \eqref{elliptic condition}. Let $P\in\partial\Omega$ be an admissible point with $F$ as in \eqref{eqn:chgvar}. We define
	\begin{equation}\label{eqn:bdry-fN}f_N(z):=c_0^{-1/2}
	(M(y))^t(\nu(y)\times \nabla v_N(y))|_{y=F^{-1}(z)},\end{equation}
	where 
	\[c_0=(1+|\nabla'\phi(p')|^2)\int_{\R^{d-1}}\eta(|x'|)^2~dx',\]
	and $M$ and $v_N$ are given by \eqref{eqn:Jacobi} and \eqref{eqn:vN}, respectively. Then 
	\[I(f_N|_{\partial\Omega}):=\frac{i}{\omega}\int_{\partial\Omega}\left[\Lambda^A_{\mu,\gamma}(f_N|_{\partial\Omega})\times\nu\right]\cdot \overline{f_N}~dS\rightarrow {\gamma}(P)\]
	as $N\rightarrow\infty$.
\end{theorem}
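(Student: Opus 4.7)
The plan is to convert the boundary integral $I(f_N)$ into a volume integral on $\wt \Omega$, decompose it into a principal part plus remainders, and then show that only the principal part survives the limit. More precisely, let $(E,H)$ solve \eqref{Main eq 1} with tangential trace $f_N$, and let $(\wt E, \wt H)$ be its pull-back. By construction of $f_N$ in \eqref{eqn:bdry-fN} and the transformation rule \eqref{id:tangential_flat-part}, I can write $\wt E = c_0^{-1/2}(\nabla v_N + w_1)$ and $\wt H = c_0^{-1/2} w_2$, where $(w_1, w_2)$ satisfies the auxiliary system \eqref{eqn:MErem}. Applying the integration-by-parts identity \eqref{eqn:IBP} and changing variables via $F$ then yields
\[
I(f_N) = c_0^{-1}\Big(T_{\mathrm{main}} + T_{\mathrm{cross}} + T_{w_1} + T_{w_2}\Big),
\]
where the four summands are the ones displayed in \eqref{eqn:I-terms}.

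For the principal part $T_{\mathrm{main}} = \int_{\wt\Omega} \gamma(F(y))\, \nabla\overline{v_N}\cdot MM^t \nabla v_N\, dy$, I would verify that $\mu$ and $\gamma$ satisfy hypotheses (H1) and (H2) of \cite{brown2001recovering} at the admissible point $P$ (this is exactly what \eqref{eqn:H1}, the subsequent estimate, and \eqref{lim:lebesgue_point} are for), and then invoke Brown's Lemma 1 to conclude \eqref{eqn:lem1-Br}. Since $d=3$ there is no rescaling in $N$, and after multiplication by $c_0^{-1}$ the limit is exactly $\gamma(P)$.

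The heart of the proof is to show that $T_{\mathrm{cross}}$, $T_{w_1}$, and $T_{w_2}$ are $o(1)$ as $N \to \infty$. A standard energy estimate for \eqref{eqn:MErem} yields $\|w_1\|_{L^2} + \|w_2\|_{L^2} \lesssim \|\wt\gamma\nabla v_N\|_{L^2}$, which for $d=3$ is only $O(1)$; Cauchy–Schwarz is therefore insufficient. Here Brown exploited Hardy's inequality, but as the authors note, no obvious analogue is available for \eqref{eqn:MErem}. The plan is instead to use a duality argument: for the cross term I would integrate by parts and transfer the gradient from $\overline{v_N}$ onto $\gamma MM^t w_1$, using that $\|v_N\|_{L^2(\wt\Omega)} = O(N^{-(d+1)/4})$ is strictly smaller than $\|\nabla v_N\|_{L^2}$. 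The interior contribution then pairs $\overline{v_N}$ against $\curl$- and divergence-like combinations of $w_1$ that can be rewritten, via the first equation in \eqref{eqn:MErem}, in terms of $w_2$ and lower order terms; the boundary contribution is controlled using $\nu \times w_1 = 0$ on $\partial \wt\Omega$. Applied to the self-terms $T_{w_1}$ and $T_{w_2}$, the same duality argument bootstraps the $L^2$ bounds for $(w_1, w_2)$ into genuine $o(1)$ decay.

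The main obstacle, and the place where I expect all the delicate work to sit, is precisely the duality step in the previous paragraph: one must choose the integration-by-parts carefully so that every factor that does not bring a decay in $N$ is paired with $\overline{v_N}$ rather than $\nabla\overline{v_N}$, handle the non-smoothness of $\partial\wt\Omega$ and of the coefficients $M$, $\wt\mu$, $\wt\gamma$, and make sure the boundary contributions do not reintroduce a term of the size of $T_{\mathrm{main}}$. Once this is accomplished, combining Steps on the principal part and on the remainders gives $I(f_N) \to \gamma(P)$.
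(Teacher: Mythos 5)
Your overall architecture matches the paper's: pass to the volume integral via \eqref{eqn:IBP}, split as in \eqref{eqn:I-terms}, handle the main term with Brown's Lemma 1, and reduce everything else to showing $\|w_1\|_{(L^2(\wt\Omega))^3}+\|w_2\|_{(L^2(\wt\Omega))^3}=o(1)$. However, there is a genuine gap exactly at the step you flag as "the heart of the proof": your proposed duality move --- integrating by parts to transfer the gradient from $\overline{v_N}$ onto $\gamma MM^t w_1$ --- does not close. The field $w_1$ lies only in $H(\curl;\wt\Omega)$, so neither $\nabla w_1$ nor $\nabla\cdot(\gamma MM^t w_1)$ is directly controlled; the only divergence information available comes from taking $\nabla\cdot$ of the second equation in \eqref{eqn:MErem}, which gives $\nabla\cdot(\wt\gamma w_1)=-\nabla\cdot(\wt\gamma\nabla v_N)$ and returns you to the very quantity you were trying to avoid estimating (the one Brown handles with Hardy's inequality). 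Moreover, the self-terms $T_{w_1}$, $T_{w_2}$ are quadratic in $(w_1,w_2)$ and contain no factor of $v_N$ at all, so no integration by parts against $v_N$ can help there; you genuinely need an $o(1)$ bound on the $L^2$ norms themselves, and your scheme never produces one. Note also that the dangerous piece of the source, $\wt\gamma(0)\psi_N\nabla e_N$, has $L^2$ norm of order $O(1)$ when $d=3$, so Cauchy--Schwarz against the dual solution is off by a full order.

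The paper's mechanism, which your proposal is missing, has two ingredients. First, the duality is set up against the adjoint Maxwell system \eqref{eqn:MErem-dual} with arbitrary $(G_1,G_2)\in (L^2(\wt\Omega))^6$, yielding \eqref{eqn:dual}, so that it suffices to bound $\big|\int_{\wt\Omega}\wt\gamma\nabla v_N\cdot\overline{u_1}\,dy\big|$ by $o(1)\|u_1\|_{H(\curl;\wt\Omega)}$. Second --- and this is the key algebraic trick replacing Hardy's inequality --- the source is split as in \eqref{eqn:est2}; the first two pieces are small in $L^2$, and for the leading piece one uses that $\beta=M(0)^t(i\alpha-\vec e_d)$ satisfies $\beta\cdot\beta=0$ by the choice \eqref{eqn:alpha-en}, so that $\wt\gamma(0)\nabla e_N$ is divergence-free and hence equals $\curl L_N$ for the explicit field $L_N=\gamma(F(0))(a+ib)e_N$, which carries \emph{no} factor of $N$. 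Integrating by parts as in \eqref{eqn:pass_curl} throws the curl onto $\overline{u_1}$ (controlled by $\|u_1\|_{H(\curl;\wt\Omega)}$, with no boundary term since $\nu\times u_1=0$), and the remaining factors $\|\psi_N L_N\|_{L^2}$ and $\|\nabla\psi_N\cdot L_N\|_{L^2}$ are $o(1)$. Without identifying this curl-potential structure (or an equivalent gain of one power of $N$ on the leading part of the source), the remainder estimate does not go through.
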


\begin{proof}
	To show that the rest two terms in \eqref{eqn:I-terms} are lower order terms, 
	it suffices to show that the $(L^2(\wt\Omega))^3$-norms of $w_1$ and $w_2$ are $o(1)$.
	
	First, we need to consider the dual of the standard regularity estimate for the Maxwell's equations, targeting the $L^2$-norm of the solution.

	Notice that the elliptic condition for the parameters is preserved in the following dual problem:
	Given $(G_1, G_2)\in (L^2(\wt\Omega))^6 $, except for a discrete set of frequencies, there exists a unique solution $(u_1, u_2)\in H(\curl;\wt\Omega)\times H(\curl;\wt\Omega)$ to
	\begin{equation}\label{eqn:MErem-dual}
	\begin{cases}
	\curl u_1+i\omega \wt\mu u_2=G_1 & \text{ in }\wt \Omega, \\
	\curl u_2-i\omega\overline{\wt\gamma} u_1=G_2 & \text{ in }\wt \Omega,\\
	\nu\times u_1=0 & \text{ on }\partial\wt\Omega.
	\end{cases}
	\end{equation}
	Furthermore, we have
	\begin{equation}\label{eqn:reg}\|u_1\|_{H(\curl;\wt\Omega)}+\|u_2\|_{H(\curl;\wt\Omega)}\lesssim \|G_1\|_{(L^2(\wt\Omega))^3}+\|G_2\|_{(L^2(\wt\Omega))^3}.\end{equation}
	Then by integration by parts (duality), we have
	\begin{equation}\label{eqn:dual}\begin{split}\left|\int_{\wt\Omega} w_1\cdot \overline{G_2}+w_2\cdot \overline {G_1}~dy\right|=&\left|\int_{\wt\Omega}(-i\omega\wt\gamma\nabla v_N)\cdot \overline{u_1}~dy\right|\\
	\end{split}.\end{equation}
	It then suffices to show that the right hand side is bounded by $o(1)\|u_1\|_{H(\curl;\wt\Omega)}$ since this would imply, using \eqref{eqn:reg},
	\[\|w_1\|_{(L^2(\wt\Omega))^3}+\|w_2\|_{(L^2(\wt\Omega))^3}\leq o(1).\]
	
It is worth noticing that in \cite{brown2001recovering}, Brown used Hardy's inequality to show a similar estimate 
	\[\|\nabla\cdot\wt\gamma\nabla v_N\|_{H^{-1}(\wt\Omega)}=o(1). 
	\]
The main novelty in our approach is to replace the use of Hardy's 
inequality by a duality argument involving the possibility of writing 
$\wt \gamma(0) \nabla e_N$ as the curl of certain vector field $L_N$.
	

	Start by writing $v_N:=\psi_Ne_N$
	with
	\[\psi_N(y):=\eta(N^{1/2}|y'|)\eta(N^{1/2}y_d),\quad e_N(y)=e^{N(i\alpha-\vec e_d)\cdot y}.\]
	We will estimate the three terms of 
	\begin{equation}\label{eqn:est2}
	\wt\gamma\nabla v_N(y)=\wt\gamma(y)\nabla\psi_N e_N+\big(\wt\gamma(y)-\wt\gamma(0)\big)\psi_N\nabla e_N+\wt\gamma(0)\psi_N \nabla e_N.
	\end{equation}
	
	For the first two terms, we only need to control their $L^2$-norms by duality. 
Then we have
		\begin{equation}\label{eqn:est2-term1}
		\begin{split}
		&\quad \|\wt\gamma\nabla\psi_N e_N\|^2_{(L^2(\wt\Omega))^3}\\
		&\lesssim\|\nabla\psi_N e_N\|^2_{(L^2(\wt\Omega))^3}\\
		&= N^{\frac{2-d}{2}}\int_{\R^d}e^{-2N^{1/2}y_d}(\eta'(|y'|)^2\eta(y_d)^2+\eta(|y'|)^2\eta'(y_d)^2)~dy\\&\lesssim N^{\frac{2-d}{2}}\int_0^1 e^{-2N^{1/2}y_d}+e^{-2N^{1/2}y_d}\eta'(y_d)^2~dy_d\\ &\lesssim N^{\frac{2-d}{2}}\left(N^{-1/2}+O(e^{-N^{1/2}})\right)\\
		&=O(N^{\frac{1-d}{2}})=O(N^{-1}).
		\end{split}\end{equation}
	Similarly, 
	we consider the square of $L^2$-norm of the second term
	\begin{equation}
	\begin{split}
	&\quad N^2\int_{\wt\Omega}\left|\left(\wt\gamma(y)-\wt\gamma(0)\right)(i\alpha-\vec e_d)\right|^2\psi_N^2e^{-2Ny_d}~dy\\
	&\lesssim N^2\int_{B(0, N^{-1/2})\times\R^+}\left|\wt\gamma(y)-\wt\gamma(0)\right|^2e^{-2Ny_d}~dy,
	\end{split} 
	\label{es:second_term}
	\end{equation}
	where $B(0, N^{-1/2})$ denotes the ball in $\R^{d - 1}$ centered at $0$ and radius $N^{-1/2}$. It is convenient to write,
    \begin{align*}
    	&\quad \wt\gamma(y)-\wt\gamma(0) \\
    	  &=  \big(\gamma (F(y)) - \gamma (F(0)) \big) M(y)M(y)^t + \gamma(F(0)) \big( M(y)M(y)^t - M(0)M(0)^t \big).
    \end{align*}
	Thus, the right-had side of \eqref{es:second_term} can be bounded by
	\begin{align}\label{equ:max}
	\begin{split}
	& N^2\int_{B(0,N^{-1/2})\times\R^+} |y'|^2 e^{-2Ny_d} ~dy\\
	&\qquad+N^2 \int_{B(0, N^{-1/2})\times\R^+} |\nabla' \phi(y'+ p')-\nabla' \phi(p')|^2 e^{-2Ny_d}~dy.
	\end{split}
	\end{align} 
%
By the \eqref{lim:lebesgue_point}, we have that the previous sum is $o(1)$. It remains to prove 
	\[\left|\int_{\wt\Omega}-i\omega\wt\gamma(0)\psi_N \nabla e_N\cdot \overline{u_1}~dx\right|\leq o(1)\|u_1\|_{H(\curl;\wt\Omega)}.\]
The idea will be to write $\wt \gamma(0) \nabla e_N$ as the curl of certain vector field $L_N$.
	First, we state the explicit expression of the matrices $M$ and $MM^t$ at $0$:
	\[M (0)=\left(\begin{array}{cc}I_{d-1} & 0 \\-\nabla'\phi(p')^{t} & 1\end{array}\right) ,\quad M(0)M^t(0)=\left(\begin{array}{cc}I_{d-1} & -\nabla'\phi(p') \\-(\nabla'\phi(p'))^{t} & 1+|\nabla'\phi(p')|^2\end{array}\right).\]
	Since $\alpha$ is chosen such that $\beta = M(0)^t(i\alpha-\vec e_d) $ satisfies $\beta \cdot \beta = 0$, 
	we have that $\wt\gamma(0)\nabla e_N$ is divergence free, namely,
	\[\nabla\cdot\left(\wt\gamma(0)\nabla e_N(y)\right)=0.\]
	Therefore, there must exist a vector field $L_N = L_N(y)$ such that 
	\[\nabla\times L_N=\wt\gamma(0)\nabla e_N=N\wt\gamma(0)(i\alpha-\vec e_d)e_N.\]
	Next, look for such an $L_N$. We write an ansatz 
	\[L_N=\gamma(F(0))(a+ib)e_N\] and find $a, b\in\R^d$ satisfying the following  algebraic equations 
	\[\begin{split}&\vec e_d\times a+\alpha\times b=M(0)M(0)^t\vec e_d,\\
	&\alpha\times a-\vec e_d\times b=M(0)M(0)^t\alpha.
	\end{split}\]
	It can be verified that in $\R^3$, the choice 
	\begin{equation}
	a=\alpha=\left(\begin{array}{c}\displaystyle\frac{1+|\nabla'\phi|^2}{|\nabla'\phi|}\nabla'\phi \\    \\|\nabla'\phi|\end{array}\right)(p'),\qquad 
	b=\left(\begin{array}{c}-\displaystyle\frac{1}{|\nabla'\phi|}\partial_2\phi \\ \displaystyle\frac{1}{|\nabla'\phi|}\partial_1\phi \\1\end{array}\right)(p'),
	\label{term:choices}
	\end{equation}
	where $\nabla'\phi:=(\partial_1\phi, \partial_2\phi)^t$, qualifies and satisfies $\eta \cdot\eta =0$ and $\eta\cdot\overline\eta=2(1+|\nabla'\phi(p')|^2)$. 
	
	Finally, 
	\begin{equation}\label{eqn:pass_curl}\begin{split}
	&\int_{\wt\Omega}(\psi_N\wt\gamma(0)\nabla e_N)\cdot \overline{u_1}~dy \\
	=&\int_{\wt\Omega}\psi_N(\nabla\times L_N)\cdot \overline{u_1}~dy\\
	=&\int_{\wt\Omega} L_N\cdot \left(\psi_N\nabla\times \overline{u_1}+\nabla\psi_N\times \overline{u_1}\right)~dy\\
	\lesssim &\left({\|\psi_NL_N\|_{(L^2(\wt\Omega))^3}+\|\nabla\psi_N\cdot L_N\|_{ L^2(\wt\Omega)) }}\right)\|u\|_{H(\curl;\wt\Omega)},
	\end{split}\end{equation}
	where we have used that $\nu \times u_1=0$ on $\p \wt \Omega$.
	It is then easy to verify, similar to that for \eqref{eqn:est2-term1}, {$\|\nabla\psi_N\cdot L_N\|_{ 	L^2(\wt\Omega) }=o(1)$. For the other term,
		\[\begin{split}
		\|\psi_N L_N\|_{(L^2(\wt\Omega))^3}^2&\lesssim \int_{\wt\Omega}\eta(N^{1/2}|y'|)^2\eta(N^{1/2}y_d)^2e^{-2Ny_d}~dy\\
		&=N^{-\frac{d}{2}}\int_{\R^d}\eta(|y'|)^2\eta(y_d)^2e^{-2N^{1/2}y_d}~dy\\
		&=O(N^{\frac{-1-d}{2}}) = O(N^{-2}).
		\end{split}\]
	}
	This completes the proof. 
\end{proof}

\subsection{Reconstruction of $\mu$}

{	In order to reconstruct $\mu$, the idea is to let the magnetic energy, namely $\int_{\Omega}\mu|H|^2~dz$, dominate. By symmetry of the equations, $H$ should be chosen roughly $\nabla v_N$, for example, by equating them at the boundary. 
	From now on, we utilize the impedance map, that is, the map 
	\[\Lambda_{\mu,\gamma}^{I}: \nu\times H|_{\partial\Omega}\mapsto \nu\times E|_{\partial\Omega},\]
	then similarly to the previous section, we define our indicator functional being 
	\begin{equation}\label{eqn:JN}J(f_N|_{\partial\Omega}):=\frac i \omega\int_{\partial\Omega}\left[\overline{\Lambda^I_{\mu,\gamma}(f_N|_{\partial\Omega})}\right]\cdot (f_N\times\nu)~dS,\end{equation}
	where $f_N=\nu\times \nabla v_N$ as before. This implies
	\[\begin{split}
	&\ J(f_N|_{\partial\Omega}) \\
	=&\ \int_\Omega\mu|H|^2 - \gamma|E|^2~dx\\
	=&\ \int_{\wt\Omega}\mu(F(y)) \nabla \overline{v_N}\cdot MM^t\nabla v_N~dy\\
	&+\int_{\wt\Omega}{\mu}(F(y))\big[2\mathfrak{Re}(\nabla \overline{v_N}\cdot MM^tw_2)+\overline{w_2}\cdot MM^tw_2\big]dy\\
	&-\int_{\wt \Omega}\gamma(F(y))\overline{w_1}\cdot MM^tw_1~dy,
	\end{split}\]
	where $(w_1, w_2):=(\wt E, \wt H-\nabla v_N)$ in this section and satisfies 
	\begin{equation}
	\begin{cases}
	\curl w_1-i\omega\wt\mu w_2=i\omega\wt\mu\nabla v_N& \text{ in }\wt \Omega, \\
	\curl w_2+i\omega\wt\gamma w_1=0 & \mbox{ in }\;\wt\Omega, \\
	\nu\times w_2=0 & \text{ on }\partial\wt\Omega.
	\end{cases}\end{equation}
	
	Following the proof of Theorem \ref{thm:gamma}, the equation \eqref{eqn:dual} is replaced by
	\begin{equation}\left|\int_{\wt\Omega}w_1\cdot \overline{G_2}+w_2\cdot\overline{G_1}~dy\right|=\left|\int_{\wt\Omega}(-i\omega\wt\mu\nabla v_N)\cdot \overline{u_2}~dy\right|\end{equation}
	for any $(G_1, G_2)\in (L^2(\wt\Omega))^6$, where $(u_1, u_2)$ is the unique solution to
	\begin{align*}
		\begin{cases}
		\curl u_1+i\omega {\wt\mu} u_2=G_1 &\text{ in }\wt \Omega, \\
		\curl u_2-i\omega\overline{\wt\gamma} u_1=G_2 & \text{ in }\wt \Omega,\\
		\nu\times u_2=0 & \text{ on }\partial\wt\Omega.
		\end{cases}
	\end{align*}
	Then it is left to show similarly 
	\[\left|\int_{\wt\Omega}(-i\omega\wt\mu\nabla v_N)\cdot\overline{u_2}~dy\right|=o(1)\|u_2\|_{H(\curl;\wt\Omega)}.\]
	The proof is the same as in Theorem \ref{thm:gamma}. In particular, the integration by parts in \eqref{eqn:pass_curl} is still valid in this case using the boundary condition $\nu\times u_2|_{\partial\wt\Omega}=0$. 
	
	As a result, we obtain the reconstruction formula for $\mu$.
	\begin{theorem} Suppose that $\Omega$, $\mu$, $\varepsilon$, $\sigma$, $P \in \partial \Omega$ and $f_N$ all satisfy the assumptions in Theorem \ref{thm:gamma}. Then we have
		\[\lim_{N\rightarrow\infty}J(f_N|_{\partial\Omega})= \mu(P), \]
		where $J(f_N|_{\partial\Omega})$ is defined by \eqref{eqn:JN}. 
	\end{theorem}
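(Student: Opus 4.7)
The plan is to mirror the proof of Theorem \ref{thm:gamma}, exploiting the symmetry between the admittance and impedance formulations. First, I would apply the integration-by-parts identity \eqref{eqn:IBP} with the roles of $E$ and $H$ swapped, which expresses $J(f_N|_{\partial\Omega})$ as $\int_\Omega \mu|H|^2 - \gamma|E|^2\, dx$. After the change of variable \eqref{eqn:chgvar}, I write the pulled-back fields as $(\wt E, \wt H) = (w_1, \nabla v_N + w_2)$ where now the correction $(w_1,w_2)$ solves the system with tangential boundary condition $\nu\times w_2 = 0$ on $\partial \wt\Omega$ (since we are prescribing $\nu\times H$ this time, and we want $\nu\times \wt H = \nu\times \nabla v_N$ on the flat part of the boundary, which is automatic from the structure of $\nabla v_N$, so we impose $\nu\times w_2=0$). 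This yields
\begin{equation*}
J(f_N|_{\partial\Omega}) = \int_{\wt\Omega} \mu(F(y))\,\nabla\overline{v_N}\cdot MM^t \nabla v_N\, dy + R_N,
\end{equation*}
where $R_N$ collects the cross and quadratic terms in $w_1, w_2$.

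Next, the leading term is handled by Brown's Lemma 1 exactly as for $\gamma$: after normalization by the constant $c_0 = (1+|\nabla'\phi(p')|^2)\int_{\R^{d-1}}\eta(|x'|)^2\,dx'$ built into $f_N$ via \eqref{eqn:bdry-fN}, it converges to $\mu(P)$. So the only task is to show $R_N = o(1)$, which reduces to proving $\|w_1\|_{(L^2(\wt\Omega))^3} + \|w_2\|_{(L^2(\wt\Omega))^3} = o(1)$.

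For this I would invoke the dual problem: given $(G_1, G_2) \in (L^2(\wt\Omega))^6$, find $(u_1, u_2)$ solving the analogous adjoint Maxwell system but this time with boundary condition $\nu \times u_2 = 0$ on $\partial \wt\Omega$, satisfying the regularity estimate \eqref{eqn:reg}. Integration by parts (the boundary term now vanishes thanks to $\nu\times w_2 = \nu \times u_2 = 0$) gives
\begin{equation*}
\left|\int_{\wt\Omega} w_1\cdot \overline{G_2} + w_2\cdot \overline{G_1}\, dy\right| = \left|\int_{\wt\Omega}(-i\omega \wt\mu \nabla v_N)\cdot \overline{u_2}\, dy\right|,
\end{equation*}
so it suffices to bound the right-hand side by $o(1)\|u_2\|_{H(\curl;\wt\Omega)}$.

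Finally, I split $\wt\mu \nabla v_N = \wt\mu \nabla \psi_N\, e_N + (\wt\mu(y) - \wt\mu(0))\psi_N \nabla e_N + \wt\mu(0) \psi_N \nabla e_N$ exactly as in \eqref{eqn:est2}. The first two terms have $(L^2)^3$-norm of order $o(1)$ by the same computations as \eqref{eqn:est2-term1}--\eqref{equ:max}, using \eqref{lim:lebesgue_point} applied to $\wt\mu$ (which satisfies hypotheses H1, H2 in \cite{brown2001recovering} since $\mu \in \mathrm{Lip}(\overline\Omega)$). The third term is the one requiring the curl trick: since $\alpha$ satisfies \eqref{eqn:alpha-en}, the vector $\beta = M(0)^t(i\alpha-\vec e_d)$ is null, hence $\wt\mu(0)\nabla e_N$ is divergence-free and we can write it as $\nabla \times L_N$ with $L_N = \mu(F(0))(a+ib)e_N$ for the same choices \eqref{term:choices}. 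Integrating by parts against $\overline{u_2}$ picks up $\nabla \times \overline{u_2}$ plus a boundary term that vanishes because $\nu \times u_2 = 0$ on $\partial\wt\Omega$, and the resulting estimate \eqref{eqn:pass_curl} gives the $o(1)$ bound as before. The main (only) obstacle is verifying that swapping which field has vanishing tangential trace indeed preserves the validity of the integration-by-parts step \eqref{eqn:pass_curl}, but this is exactly what the new boundary condition $\nu\times u_2 = 0$ provides.
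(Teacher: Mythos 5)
Your proposal is correct and follows essentially the same route as the paper: the paper likewise sets $(w_1,w_2)=(\wt E,\wt H-\nabla v_N)$ with $\nu\times w_2=0$, passes to the dual problem with $\nu\times u_2=0$, and reuses the decomposition \eqref{eqn:est2} and the curl trick with $L_N=\mu(F(0))(a+ib)e_N$, noting exactly as you do that the integration by parts \eqref{eqn:pass_curl} remains valid under the new boundary condition.
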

}

\begin{proof}[Proof of Theorem \ref{thm:main}]
	By using all results in Section \ref{sec:2}, we can prove Theorem \ref{thm:main} immediately.
\end{proof}

{
	\section{Boundary determination of electromagnetic parameters with corrupted data}\label{sec:3}
	
	The main objective of this part is to stably identify boundary values of the unknown electromagnetic coefficients from the boundary measurement corrupted by errors, modeled and handled similarly to that in \cite{caro2017calderon} for the Calder\'on problem. 
	
	First, we give a description of the modeling for the random white noise,  first introduced in \cite{caro2017calderon} for the Calder\'on problem, with modifications adopted to the system of Maxwell's equations with our electromagnetic  boundary maps. In particular, the random white noise is introduced to the boundary data on the $H^{-1}(\partial\Omega)$-level as well as on the $L^2(\partial\Omega)$ one. 

\subsection{Noise modelled on $H^{-1}(\partial \Omega)$}	
	We start with the fact that $(H^{-1}(\partial\Omega))^3$ is a Hilbert space and let $\{\mathbf{e}_n : n\in\N\}$ be an orthonormal basis of $(H^{-1}(\partial\Omega))^3$. Recall that our bilinear form with corrupted data are defined as 
	\begin{align}
	\mathcal N^{A}_{\mu,\gamma}(f,g)=&\int_{\partial\Omega}(\Lambda^{A}_{\mu,\gamma}(f)\times\nu)\cdot \overline{g}~dS+\sum_{\alpha\in\N^2}(f|\mathbf{e}_{\alpha_1})(g|\mathbf{e}_{\alpha_2})X_\alpha
	\label{id:noise_admittance}\\
	\mathcal N^{I}_{\mu,\gamma}(f,g)=&\int_{\partial\Omega}\overline{\Lambda^{I}_{\mu,\gamma}(f)}\cdot (g\times\nu)~dS+\sum_{\alpha\in\N^2}(f|\mathbf{e}_{\alpha_1})(g|\mathbf{e}_{\alpha_2})X_\alpha
	\label{id:noise_impedance}
	\end{align}
	for  $f, g\in H^{-1/2}(\Div;\partial\Omega)\subset (H^{-1}(\partial\Omega))^3$, where $\alpha=(\alpha_1,\alpha_2)\in \mathbb{N}^2$ and $(\phi|\psi)$ denotes the inner product in $(H^{-1}(\partial\Omega))^3$.
	
	Then we have the following lemma after replacing $L^2(\partial\Omega)$ by $(	H^{-1}(\partial\Omega))^3$ in \cite[Lemma 2.3]{caro2017calderon}.
	\begin{lemma}\label{some lemma} There exists a complete probability space $(\Pi,\mathcal H,\mathbb P)$, and a countable family $\{X_\alpha : \alpha\in \N^2\}$ of independent complex random variables satisfying \eqref{eqn:GRV}. Moreover, for every $f, g\in (H^{-1}(\partial\Omega))^3$ we have that 
		\[\E\left|\sum_{\alpha\in\N^2}(f|\mathbf{e}_{\alpha_1})(g|\mathbf{e}_{\alpha_2})X_\alpha\right|^2=\|f\|^2_{(H^{-1}(\partial\Omega))^3}\|g\|^2_{(H^{-1}(\partial\Omega))^3}.\]
	\end{lemma}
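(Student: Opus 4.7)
The plan is to adapt verbatim the proof of \cite[Lemma 2.3]{caro2017calderon}, observing that the construction uses only the fact that $(H^{-1}(\partial\Omega))^3$ is a separable Hilbert space with a countable orthonormal basis $\{\mathbf e_n\}$; the specific inner product plays no role beyond Parseval's identity.

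First I would construct the probability space as the countable product of standard complex Gaussian spaces indexed by $\alpha \in \N^2$. Concretely, set $X_\alpha = X_\alpha^r + i X_\alpha^i$ where $X_\alpha^r$ and $X_\alpha^i$ are independent real centered Gaussians of variance $1/2$. A direct computation then gives $\E X_\alpha = 0$, $\E(X_\alpha \overline{X_\alpha}) = \E((X_\alpha^r)^2) + \E((X_\alpha^i)^2) = 1$, and $\E(X_\alpha X_\alpha) = \E((X_\alpha^r)^2) - \E((X_\alpha^i)^2) = 0$, while the product structure guarantees independence across $\alpha$. Thus \eqref{eqn:GRV} is satisfied.

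For the identity, I would introduce the truncated sums
\[
S_n := \sum_{\substack{\alpha \in \N^2 \\ |\alpha|_\infty \leq n}} (f|\mathbf e_{\alpha_1})(g|\mathbf e_{\alpha_2}) X_\alpha,
\]
and compute $\E|S_n|^2$ by expanding the modulus squared. Independence together with \eqref{eqn:GRV} gives the orthogonality relation $\E(X_\alpha \overline{X_\beta}) = \delta_{\alpha\beta}$ (this uses the vanishing of $\E(X_\alpha X_\alpha)$ to annihilate the "anti-holomorphic" cross terms that appear upon conjugation). Hence
\[
\E|S_n|^2 = \sum_{|\alpha|_\infty \leq n} |(f|\mathbf e_{\alpha_1})|^2\,|(g|\mathbf e_{\alpha_2})|^2 = \Big(\sum_{\alpha_1 \leq n}|(f|\mathbf e_{\alpha_1})|^2\Big)\Big(\sum_{\alpha_2 \leq n}|(g|\mathbf e_{\alpha_2})|^2\Big),
\]
which by Parseval's identity in $(H^{-1}(\partial\Omega))^3$ converges to $\|f\|^2_{(H^{-1}(\partial\Omega))^3}\|g\|^2_{(H^{-1}(\partial\Omega))^3}$ as $n \to \infty$.

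The only delicate point is to interpret the infinite sum $\sum_\alpha (f|\mathbf e_{\alpha_1})(g|\mathbf e_{\alpha_2}) X_\alpha$ rigorously; this is not an analytic obstacle but requires care. The same computation that yielded $\E|S_n|^2$ above gives $\E|S_n - S_m|^2 \to 0$ as $m,n \to \infty$ (the tails of the Parseval sums vanish). Therefore $(S_n)$ is Cauchy in $L^2(\Pi)$ and its limit defines the infinite series; by continuity of the norm, the squared $L^2(\Pi)$-norm of this limit equals $\|f\|^2_{(H^{-1}(\partial\Omega))^3}\|g\|^2_{(H^{-1}(\partial\Omega))^3}$, which is the claimed identity.
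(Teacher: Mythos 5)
Your proof is correct and follows essentially the same route as the paper, which simply invokes \cite[Lemma 2.3]{caro2017calderon} with $L^2(\partial\Omega)$ replaced by $(H^{-1}(\partial\Omega))^3$; you have merely written out the details of that argument (product Gaussian space, orthogonality of the $X_\alpha$, Parseval, and $L^2(\Pi)$-convergence of the partial sums). One cosmetic quibble: the cross terms in $\E|S_n|^2$ vanish already from independence and $\E X_\alpha=0$, since only $\E(X_\alpha\overline{X_\beta})$ with $\alpha\neq\beta$ appears there; the condition $\E(X_\alpha X_\alpha)=0$ is part of \eqref{eqn:GRV} but is not what annihilates those terms.
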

	
	Since the $(H^{-1}(\partial\Omega))^3$-norm is bounded by the $H^{-1/2}(\Div,\partial\Omega)$-norm, immediately, we obtain the boundedness of the operators $\mathcal N^{A}_{\mu,\gamma}$ and $\mathcal N^{I}_{\mu,\gamma}$ from the space $H^{-1/2}(\Div;\partial\Omega)\times H^{-1/2}(\Div;\partial\Omega)$ to $L^2(\Pi, \mathcal H, \mathbb P)$. 
	It gives that 
	$\left|\mathcal N^{A}_{\mu,\gamma}(f,g)\right|$, $\left|\mathcal N^{I}_{\mu,\gamma}(f,g)\right|$ are finite almost surely. Moreover, we have the following decay for the covariance.
	
	\begin{lemma}
		The following estimate holds 
		\begin{equation}\label{eqn:f_N_decay}
		\E\left|\sum_{\alpha\in\N^2}(f_N|\mathbf{e}_{\alpha_1})(f_N|\mathbf{e}_{\alpha_2})X_\alpha\right|^ 2=\|f_N\|^4_{(H^{-1}(\partial\Omega))^3}\leq C_{\partial\Omega}N^{-2}.\end{equation}
	\end{lemma}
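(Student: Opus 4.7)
My plan splits into the variance identity and the decay estimate. The first equality in \eqref{eqn:f_N_decay} is immediate from the previous lemma applied to the diagonal case $f=g=f_N$, which yields
\[
\E\Big|\sum_{\alpha\in\N^2}(f_N|\mathbf{e}_{\alpha_1})(f_N|\mathbf{e}_{\alpha_2})X_\alpha\Big|^2 = \|f_N\|^2_{(H^{-1}(\partial\Omega))^3}\cdot \|f_N\|^2_{(H^{-1}(\partial\Omega))^3} = \|f_N\|^4_{(H^{-1}(\partial\Omega))^3}.
\]

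For the inequality, it suffices to show $\|f_N\|_{(H^{-1}(\partial\Omega))^3}\lesssim N^{-1/2}$. The key observation is that, up to the Jacobian $M^t$ and the normalizing constant $c_0^{-1/2}$, $f_N$ equals the tangential surface vector $\nu\times \nabla v_N$. Thus a surface integration by parts can move one derivative off the oscillatory $\nabla v_N$ (of size $\sim N$) and onto the test function, leaving only $v_N|_{\partial\Omega}$ which is of unit amplitude but supported in a shrinking ball. Concretely, pair $f_N$ with an arbitrary $\phi\in (H^1(\partial\Omega))^3$ of unit norm and set $\Phi:=M\phi$. Because $M\in L^\infty$ (the boundary is only Lipschitz, but $M$ is bounded), one still has $\|\Phi\|_{(H^1(\partial\Omega))^3}\lesssim \|\phi\|_{(H^1(\partial\Omega))^3}$, and
\[
\langle f_N,\phi\rangle_{\partial\Omega} = c_0^{-1/2}\int_{\partial\Omega}(\nu\times\nabla v_N)\cdot\Phi\, dS = -c_0^{-1/2}\int_{\partial\Omega} v_N\,\mathrm{Curl}_{\partial\Omega}\Phi\,dS,
\]
where the second equality uses the surface integration by parts provided by the duality between $H^{-1/2}(\Div;\partial\Omega)$ and $H^{-1/2}(\Curl;\partial\Omega)$ from \eqref{eqn:HDiv}--\eqref{eqn:HCurl}, together with the compact support of $v_N|_{\partial\Omega}$ inside a single chart (no boundary contributions appear). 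By Cauchy--Schwarz and $\|\mathrm{Curl}_{\partial\Omega}\Phi\|_{L^2(\partial\Omega)}\lesssim \|\phi\|_{(H^1(\partial\Omega))^3}$,
\[
|\langle f_N,\phi\rangle|\lesssim \|v_N|_{\partial\Omega}\|_{L^2(\partial\Omega)}\|\phi\|_{(H^1(\partial\Omega))^3}.
\]
Finally, in the chart $F$ one has $v_N(F(x',0)) = \eta(\sqrt{N}|x'|)e^{iN\alpha'\cdot x'}$, so rescaling $y'=\sqrt{N}x'$ gives
\[
\|v_N|_{\partial\Omega}\|_{L^2(\partial\Omega)}^2 \lesssim N^{-(d-1)/2} = N^{-1}\quad (d=3).
\]
Taking the supremum over $\phi$ yields $\|f_N\|_{(H^{-1}(\partial\Omega))^3}\lesssim N^{-1/2}$, and raising to the fourth power produces $\|f_N\|^4_{(H^{-1}(\partial\Omega))^3}\leq C_{\partial\Omega}N^{-2}$.

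The only real technical point is justifying the surface integration by parts on a merely Lipschitz boundary: since $\nu\times\nabla v_N \in H^{-1/2}(\Div;\partial\Omega)$ (in fact much better), the identity must be interpreted in the Tartar--Buffa duality pairing rather than as a classical surface integral, but the compact support of $v_N|_{\partial\Omega}$ inside the chart eliminates any boundary-of-support terms and the identity reduces to its flat-space analogue after the change of coordinates. All other steps are routine scaling.
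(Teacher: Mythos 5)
Your overall strategy is the same as the paper's: the equality follows from the covariance lemma with $f=g=f_N$, and the decay reduces to $\|f_N\|_{(H^{-1}(\partial\Omega))^3}\lesssim N^{-1/2}$, obtained by an integration by parts that trades the factor $N$ in $\nabla v_N$ for one derivative of the test function, leaving $\|v_N\|_{L^2}\sim N^{-1/2}$; that final scaling is correct. The gap is in how you justify the integration by parts. The claim that $\|\Phi\|_{(H^1(\partial\Omega))^3}\lesssim\|\varphi\|_{(H^1(\partial\Omega))^3}$ for $\Phi=M\varphi$ \emph{because $M\in L^\infty$} is false: multiplication by a bounded matrix does not preserve $H^1$ unless the matrix is (at least) Lipschitz, and here the entries of $M$ involve $\nabla'\phi$, which on a merely Lipschitz boundary is only bounded and measurable. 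For the same reason the surface curl of $\Phi$, and the surface Jacobian $\sqrt{1+|\nabla'\phi|^2}$ that you have suppressed in passing from $\partial\Omega$ to the flattened chart, formally involve second derivatives of the graph function, which need not exist. Your closing remark that the identity ``reduces to its flat-space analogue after the change of coordinates'' is precisely where the difficulty lives: the change of coordinates is only bi-Lipschitz, and the weights it introduces are not differentiable, so the flat two-dimensional integration by parts produces terms you cannot control in $L^2$.

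The paper circumvents this by integrating by parts in the bulk rather than on the surface. It writes $f_N=c_0^{-1/2}\,\nu\times W_N$ with $W_N$ the push-forward of the exact form $\nabla v_N$, hence curl-free in $\Omega$; the Green formula for the curl converts the boundary pairing into $-\int_\Omega W_N\cdot\nabla\times\varphi_e\,dz$ for an interior extension $\varphi_e\in (H^{3/2}(\Omega))^3$, and pulling everything back to $\widetilde\Omega$ as differential forms yields $-\int_{\partial\widetilde\Omega} v_N\,\nu\cdot(\nabla\times\widetilde\varphi)\,dS$ with $\widetilde\varphi=(M^t)^{-1}\varphi_e\circ F$ the $1$-form pull-back. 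Two things make this work: the correct weight is $(M^t)^{-1}$ (not $M$), and in $\nu\cdot(\nabla\times\widetilde\varphi)$ the second derivatives of the graph function cancel (exterior differentiation commutes with pull-back), leaving only first tangential derivatives of the trace of $\varphi_e$ multiplied by $L^\infty$ factors, hence an $L^2(\partial\widetilde\Omega)$ quantity controlled by $\|\varphi\|_{(H^1(\partial\Omega))^3}$. To repair your argument you would either need to exhibit this cancellation explicitly in the flattened chart with the correct weights, or assume $\partial\Omega\in C^{1,1}$ so that $M\in W^{1,\infty}$ --- but the latter is not available in the Lipschitz setting of this lemma.
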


\begin{proof}
	
    The first equality directly comes from Lemma \ref{some lemma} and the second inequality is obtained as follows. 
	From \eqref{eqn:bdry-fN}, one has the equivalent formula
	\[f_N(z)=c_0^{-1/2}\nu(z)\times W_N(z), \qquad z\in\partial\Omega, \]
	where 
	\[W_N(z):=(F^{-1})^*(\nabla v_N)=M(y)^t\nabla_y v_N(y)|_{y=F^{-1}(z)}.\] 
	Here, $\nu(z)$ is the unit outer normal to $\partial\Omega$ while $\nu(y)$ in \eqref{eqn:bdry-fN} is the unit outer normal to $\partial\tilde\Omega$.

	It is easy to verify 
	\[\nabla_z\times W_N (z)=0 .\]
	For $\varphi\in (H^1(\partial\Omega))^3$, 
	\[\int_{\Omega}\nabla\times W_N\cdot \varphi_e-W_N\cdot\nabla\times\varphi_e~dz=\int_{\partial\Omega}f_N\cdot\varphi~dS, \]
	where $\varphi_e\in (H^{3/2}(\Omega))^3$ is the extension such that $\varphi=\nu\times\varphi_e|_{\partial\Omega}\times\nu$. The first term of the left hand side vanishes by above. For the second term of the left hand side, after a change of variable and passing the derivative, we have
	\[\begin{split}\int_{\partial\Omega}f_N\cdot\varphi~dS=&-\int_\Omega W_N\cdot\nabla\times\varphi~dz\\
	=&-\int_{\Omega}\left(\frac{\partial y}{\partial z}\right)^t\left(\nabla v_N\circ F^{-1}\right)(z)\cdot (\nabla_z\times\varphi(z))~dz\\
	=&-\int_{\widetilde\Omega} \nabla v_N(y)\cdot  (\nabla_y \times \tilde\varphi(y)	)
	\  \det\left(\frac{\partial z}{\partial y}\right) ~dy\\
	=&-\int_{\partial\widetilde\Omega}  v_N\nu\cdot (\nabla_y\times\widetilde\varphi(y))~dS , 
	\end{split}\]
	where $\widetilde\varphi$ is the push-forward of $\varphi$ by $F$ given by
	\[\widetilde\varphi=(M^t)^{-1}(y)\varphi(F(y)).\]
	Finally, it is not hard to see that
	\[\int_{\partial\Omega}f_N\cdot\varphi~dS\leq C\|v_N\|_{(L^2(\partial\widetilde\Omega))^3}\|\varphi\|_{(H^1(\partial\Omega))^3}.\]
	Therefore,
	\[\|f_N\|_{(H^{-1}(\partial\Omega))^3}\leq C\|v_N\|_{(L^2(\partial\widetilde\Omega))^3}\leq C\|v_N\|_{(H^{1/2}(\widetilde\Omega))^3}\leq C_{\p\Omega}N^{-1/2} \]
	which gives \eqref{eqn:f_N_decay}.  
	
\end{proof}

We state one crucial result from \cite{caro2017calderon} which also works for the vector-valued functions in this paper. This result will lead to the unique determination and the rate of convergence of parameters for both Maxwell and elasticity systems with corrupted data.  	
\begin{proposition}[Lemma 2.5 in \cite{caro2017calderon}]\label{Prop auxilliary in max}
 Let $(X,\Sigma,m)$ be a measure space and $\{f_n\}_{n=1}^\infty$ be a vector-valued sequence in $(L^s(X,\Sigma,m))^3$ for $s\in [1,\infty)$. Assume that $f_n \to f $ in  $(L^s(X,\Sigma,m))^3$ for some $f\in  (L^s(X,\Sigma,m))^3$ and there exists a sequence of positive numbers $\{\lambda_n\}_{n=1}^\infty\subset \R_+$ with $\lambda_n \to 0$ as $n\to \infty$ such that 
		
		$$
		\sum_{n=1}^\infty \dfrac{1}{\lambda_n^s}\int_X |f_n-f|^s dm < \infty.
		$$
		Then one has $f_n \to f$ for almost every $x \in X$. 
		
		Suppose furthermore that $m(X) <\infty$. Then, for every $\epsilon>0$, there exists a $n_0\in \N$ such that 
		$$
		m \{x\in X: \ |f_n(x)-f(x)|\leq \lambda_n\} \geq m(X)-\epsilon, \text{ for }n\geq n_0.
		$$
 
\end{proposition}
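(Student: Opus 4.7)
The plan is to reduce both conclusions to standard measure-theoretic tools applied to the nonnegative scalar sequence
\[
g_n(x) := \frac{|f_n(x)-f(x)|^s}{\lambda_n^s},
\]
which is well defined since $\lambda_n>0$, and whose integrals $\int_X g_n \, dm$ form a summable series by hypothesis. Notice that although the $f_n$ are vector-valued, both the hypothesis and the two conclusions involve only the scalar modulus $|f_n-f|$, so the argument is intrinsically scalar.

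For the first (pointwise) conclusion, I would use the monotone convergence theorem (Beppo Levi) applied to the partial sums $S_N := \sum_{n=1}^N g_n$. These are monotone increasing and nonnegative, so
\[
\int_X \sum_{n=1}^\infty g_n \, dm \;=\; \sum_{n=1}^\infty \int_X g_n \, dm \;<\; \infty,
\]
by the assumed summability. Consequently $\sum_{n=1}^\infty g_n(x) < \infty$ for $m$-almost every $x\in X$, which forces $g_n(x)\to 0$ almost everywhere. This gives the slightly stronger pointwise statement $|f_n(x)-f(x)|/\lambda_n \to 0$ a.e., which in particular implies $f_n\to f$ a.e.

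For the second (quantitative) conclusion, assuming $m(X)<\infty$, I would apply Markov's inequality. Setting $A_n := \{x\in X : |f_n(x)-f(x)| > \lambda_n\} = \{x : g_n(x) > 1\}$, one has
\[
m(A_n) \;\leq\; \int_X g_n \, dm.
\]
Since the series $\sum_n \int_X g_n\, dm$ converges, its general term tends to zero, so for every $\epsilon>0$ there exists $n_0$ such that $m(A_n)<\epsilon$ for all $n\geq n_0$. Using $m(X)<\infty$, this rearranges to $m(X\setminus A_n) \geq m(X)-\epsilon$, which is exactly the desired conclusion.

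The main (and essentially only) obstacle to watch for is ensuring the hypotheses of monotone convergence and Markov's inequality are applied to a genuinely nonnegative measurable scalar sequence; once the problem is recast in terms of $g_n$, both steps are routine and the vector-valued nature of $\{f_n\}$ plays no role. The finiteness hypothesis $m(X)<\infty$ in the second part is used only to turn the bound on $m(A_n)$ into the stated bound on $m(X\setminus A_n)$; without it the claim would be vacuous.
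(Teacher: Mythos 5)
Your proof is correct. The first part (Tonelli/monotone convergence applied to $g_n=|f_n-f|^s/\lambda_n^s$, giving $\sum_n g_n<\infty$ a.e.\ and hence $g_n\to0$ a.e., combined with $\lambda_n\to0$) is the standard argument and matches what the paper imports from \cite{caro2017calderon}; note in passing that the hypothesis $f_n\to f$ in $(L^s)^3$ is never needed, since it already follows from the summability assumption. For the second part, however, you and the paper choose $n_0$ differently. You use the single-term Markov bound $m\{|f_n-f|>\lambda_n\}\le\int_X g_n\,dm$ together with the fact that the general term of a convergent series tends to zero, so your $n_0$ only needs $\int_X g_n\,dm\le\epsilon$ for $n\ge n_0$; this proves the statement exactly as written (each $n$ separately). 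The paper (see Remark \ref{rmk:n0}) instead requires $n_0$ to satisfy the tail-sum condition $\sum_{n\ge n_0}\lambda_n^{-s}\int_X|f_n-f|^s\,dm\le\epsilon$, i.e.\ it runs a Borel--Cantelli-type subadditivity argument $m\bigl(\bigcup_{n\ge n_0}A_n\bigr)\le\sum_{n\ge n_0}m(A_n)$ with $A_n=\{|f_n-f|>\lambda_n\}$. That choice buys a common exceptional set valid for all $n\ge n_0$ simultaneously, and it is the version actually used downstream: the explicit thresholds $N_0\ge c\,\epsilon^{-1/(1-\theta)}$ in Theorems \ref{thm:corrupted}, \ref{thm:corrupted-L2} and \ref{Thm:elastic boundary determination} are computed precisely by summing the tail $\sum_{N\ge N_0}C_{\partial\Omega}^2N^{-(2-\theta)}\le\epsilon$. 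Your choice of $n_0$ would instead give a threshold of order $\epsilon^{-1/(2-\theta)}$ for the pointwise-in-$n$ statement, so if your version of the proposition is to feed those theorems verbatim, either adopt the tail-sum choice of $n_0$ or adjust the exponents in the stated rates accordingly.
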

\begin{remark}\label{rmk:n0}
The $n_0$ in the second part of the statement should satisfy 
\[\sum_{n=n_0}^\infty\frac{1}{\lambda_n^s}\int_X|f_n-f|^s~dm\leq \epsilon.\]
\end{remark}
\bigskip


	
	\begin{proof}[Proof of Theorem \ref{thm:corrupted}]
	The part (1) is a consequence of the first part of Proposition \ref{Prop auxilliary in max} to the sequence $\left\{\sum(f_{N}| \mathbf{e}_{\alpha_1}) (\overline{f_{N}}| \mathbf{e}_{\alpha_2}) X_\alpha~:~N\in\mathbb N\backslash\{0\}\right\}$ with $\lambda_N=N^{-\theta}$.\\
	
	To prove part (2) of Theorem \ref{thm:corrupted}, again we take $\lambda_N=N^{-\theta/2}$. Applying the second part of Proposition \ref{Prop auxilliary in max} to the sequence $\{\sum(f_{N}| \mathbf{e}_{\alpha_1}) (\overline{f_{N}}| \mathbf{e}_{\alpha_2}) X_\alpha:N\in\mathbb N\backslash\{0\}\}$, and using \eqref{eqn:f_N_decay}, we obtain
	\[\mathbb P\left\{\left|\sum(f_{N}| \mathbf{e}_{\alpha_1}) (\overline{f_{N}}| \mathbf{e}_{\alpha_2}) X_\alpha\right|\leq N^{-\theta/2}\right\}\geq 1-\epsilon\]
	for $N\geq N_0$, where $N_0$ is as in Remark \ref{rmk:n0}, that is, we need
	\[\sum_{N=N_0}^\infty \frac{C_{\partial\Omega}^2}{N^{2-\theta}}\leq\epsilon.\]
	This holds whenever
	\[(N_0-1)^{1-\theta}>\frac{C_{\partial\Omega}^2}{\epsilon(1-\theta)},\] which gives $N_0\geq c\epsilon^{-\frac1{1-\theta}}$. 
	Lastly, we see that there exist $C_\gamma>0$ and $C_\mu>0$ such that 
	\[\left\{\left|\sum(f_{N}| \mathbf{e}_{\alpha_1}) (\overline{f_{N}}| \mathbf{e}_{\alpha_2}) X_\alpha\right|\leq N^{-\theta/2}\right\}\subset\left\{|\mathcal N^A_{\mu,\gamma}(f_N, {f_N})-\gamma(P)|\leq C_\gamma N^{-\theta/ 2}\right\}\]
	and
	\[\left\{\left|\sum(f_{N}| \mathbf{e}_{\alpha_1}) (\overline{f_{N}}| \mathbf{e}_{\alpha_2}) X_\alpha\right|\leq N^{-\theta/2}\right\}\subset\left\{|\mathcal N^I_{\mu,\gamma}(f_N, {f_N})-\mu(P)|\leq C_\mu N^{-\theta/ 2}\right\},\]
	respectively. This completes the proof.
	\end{proof}

\subsection{Noise modelled on $L^2(\partial \Omega)$}
The noisy admittance and impedance data, $\mathcal{N}^A_{\mu, \gamma}$ and $\mathcal{N}^I_{\mu, \gamma}$ respectively, are defined in the level of $L^2(\partial \Omega)$ exactly in the same way as in \eqref{id:noise_admittance} and \eqref{id:noise_impedance} with the exception of some details. The sequence $\{ \mathbf{e}_n : n\in \N  \}$ is an orthonormal basis of $(L^2(\partial \Omega))^3$, the inner product $(\phi|\psi) = \int_{\partial \Omega} \phi \cdot \overline{\psi}dS$, and finally $f, g \in H^{1/2}(\Div, \partial \Omega)$. To make rigorous sense of this definition, we will assume the boundary of the domain to be locally defined by the graph of $C^{1,1}$ functions.

\begin{lemma}\label{lem:covariance} There exists a complete probability space $(\Pi,\mathcal H,\mathbb P)$, and a countable family $\{X_\alpha : \alpha\in \N^2\}$ of independent complex random variables satisfying \eqref{eqn:GRV}. Moreover, for every $f, g\in (L^2(\partial\Omega))^3$ we have that 
		\[\E\left|\sum_{\alpha\in\N^2}(f|\mathbf{e}_{\alpha_1})(g|\mathbf{e}_{\alpha_2})X_\alpha\right|^2=\|f\|^2_{(L^2(\partial\Omega))^3}\|g\|^2_{(L^2(\partial\Omega))^3}.\]
	\end{lemma}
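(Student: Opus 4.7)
The strategy is to repeat the argument of Lemma \ref{some lemma} with $(L^2(\partial\Omega))^3$ playing the role of $(H^{-1}(\partial\Omega))^3$. Since the only inputs to that proof are the Hilbert space structure of the space in which $f$ and $g$ live (orthonormal basis plus Parseval) together with the defining identities \eqref{eqn:GRV} of the Gaussian family, the argument transfers without essential change. For the first assertion I would simply reuse the probability space $(\Pi,\mathcal{H},\PP)$ and the family $\{X_\alpha\}_{\alpha\in\N^2}$ of Lemma \ref{some lemma}; equivalently, this space can be constructed once and for all as the product, indexed by $\N^2$, of copies of the standard complex Gaussian law on $\C$ (the one with $\E X = 0$, $\E |X|^2 = 1$, $\E X^2 = 0$), a construction that has nothing to do with the Hilbert space used to model the noise.

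For the covariance identity I would approximate $f$ and $g$ by their partial Fourier sums $f_K = \sum_{k\le K}(f|\mathbf{e}_k)\mathbf{e}_k$ and $g_K = \sum_{k\le K}(g|\mathbf{e}_k)\mathbf{e}_k$ in $(L^2(\partial\Omega))^3$, so that the truncated random quantity $S_K = \sum_{\alpha\in\{1,\dots,K\}^2}(f|\mathbf{e}_{\alpha_1})(g|\mathbf{e}_{\alpha_2})X_\alpha$ is a finite linear combination of the $X_\alpha$. Expanding $|S_K|^2 = S_K\,\overline{S_K}$ into a double sum over $\alpha,\beta$ and taking expectations term by term, the independence of the $X_\alpha$ together with the second and third identities in \eqref{eqn:GRV} forces $\E(X_\alpha \overline{X_\beta}) = \delta_{\alpha\beta}$ and $\E(X_\alpha X_\beta) = 0$ for all $\alpha,\beta\in\N^2$. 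Hence the only surviving diagonal contribution is $\sum_{\alpha_1\le K}|(f|\mathbf{e}_{\alpha_1})|^2 \sum_{\alpha_2\le K}|(g|\mathbf{e}_{\alpha_2})|^2$, which equals $\|f_K\|^2_{(L^2(\partial\Omega))^3}\,\|g_K\|^2_{(L^2(\partial\Omega))^3}$ by Parseval.

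The only point that really requires a few lines is the passage $K \to \infty$: the same diagonal computation applied to $S_K - S_{K'}$ shows that $\E|S_K - S_{K'}|^2$ is controlled by quantities of the form $\|f-f_K\|^2 \|g\|^2 + \|f_K\|^2 \|g-g_K\|^2$, so $\{S_K\}$ is Cauchy in $L^2(\Pi,\mathcal{H},\PP)$ and converges to the series appearing in the statement. Continuity of the $L^2(\Pi)$-norm then yields $\E|S|^2 = \lim_K \E|S_K|^2 = \|f\|^2_{(L^2(\partial\Omega))^3}\,\|g\|^2_{(L^2(\partial\Omega))^3}$. I do not anticipate any genuine obstacle; the whole proof is essentially a transcription of Lemma \ref{some lemma} with a change of ambient Hilbert space, the only caveat being to keep track of the fact that now $\{\mathbf{e}_n\}$ is an $L^2(\partial\Omega)^3$-orthonormal basis and $(\cdot|\cdot)$ denotes the $L^2$ inner product.
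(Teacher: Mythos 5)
Your proof is correct and is essentially the argument the paper intends: the paper itself gives no proof of Lemma \ref{lem:covariance}, deferring (as for Lemma \ref{some lemma}) to Lemma 2.3 of \cite{caro2017calderon}, whose content is exactly your diagonal second-moment computation plus Parseval and an $L^2(\Pi)$-limit of the truncated sums. The only cosmetic quibble is that the vanishing of the off-diagonal terms $\E(X_\alpha\overline{X_\beta})$ for $\alpha\neq\beta$ uses independence together with the \emph{first} identity $\E X_\alpha=0$ in \eqref{eqn:GRV}, not the second and third.
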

	
	\begin{lemma}\label{lem:f_N_norm}
		The following estimate holds 
		\begin{equation}\label{eqn:f_N_behaviour}
		\E\left|\sum_{\alpha\in\N^2}(f_N|\mathbf{e}_{\alpha_1})(f_N|\mathbf{e}_{\alpha_2})X_\alpha\right|^ 2=\|f_N\|^4_{(L^2(\partial\Omega))^3}\leq C_{\partial\Omega}.
		\end{equation}
	\end{lemma}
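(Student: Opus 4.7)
The first equality in \eqref{eqn:f_N_behaviour} is an immediate specialization of Lemma \ref{lem:covariance} to $f=g=f_N$: independence of the $X_\alpha$ together with the second-order moments \eqref{eqn:GRV} collapses the $L^2(\Pi)$-norm of the Gaussian series to $\|f_N\|^2_{(L^2)^3}\cdot\|f_N\|^2_{(L^2)^3}=\|f_N\|^4_{(L^2)^3}$. This is a one-line step, entirely parallel to how \eqref{eqn:f_N_decay} was obtained in the $(H^{-1})^3$-case.

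The remaining content is the uniform bound $\|f_N\|^4_{(L^2(\partial\Omega))^3}\le C_{\partial\Omega}$, for which my plan is to compute $\|f_N\|_{(L^2(\partial\Omega))^3}^2$ directly from \eqref{eqn:bdry-fN}. The change of variables $z=F(y)$ transforms the surface integral on $\partial\Omega$ into one on the flattened boundary $\partial\wt\Omega$; since $c_0^{-1/2}$ is a fixed constant, $M$ is Lipschitz, and the surface Jacobian of the bi-Lipschitz map $F$ is bounded above and below uniformly in $N$, one is reduced to estimating $\|\nu\times\nabla v_N\|^2_{(L^2(\partial\wt\Omega))^3}$ by a constant depending only on $\partial\Omega$.

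For $N$ large the support of $v_N|_{\partial\wt\Omega}$ lies in the flat piece $\{y_d=0\}\cap B(0,N^{-1/2})$, where $\nu=-\vec{e}_d$. On this patch $v_N(y',0)=\eta(N^{1/2}|y'|)\,e^{iN\alpha'\cdot y'}$, and a direct computation gives $\nu\times\nabla v_N=(\partial_2 v_N,-\partial_1 v_N,0)$, with each tangential derivative $\partial_j v_N$ ($j=1,2$) decomposing into a cutoff piece of size $O(N^{1/2})$ coming from $N^{1/2}\eta'(N^{1/2}|y'|)(y_j/|y'|)$ and an oscillatory piece of size $O(N)$ coming from $iN\alpha_j\eta(N^{1/2}|y'|)$. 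After the natural rescaling $x'=N^{1/2}y'$ (whose Jacobian in $d=3$ is $N^{-1}$), the integral reduces to a fixed $N$-independent integral in $x'$ involving only $\eta$, $\eta'$, and $|\alpha'|$, and the balance of $N$-powers yields the claimed uniform bound.

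The main technical point is the careful bookkeeping of the competing $N$-scales in the scaling argument, the same type of computation that appears in \eqref{eqn:lem1-Br} for the proof of Theorem \ref{thm:gamma}; no new analytic tool is required. The instructive contrast with \eqref{eqn:f_N_decay} is that the duality pairing against $H^1$ test functions there produces an extra integration-by-parts gain (effectively, pairing against $v_N$ itself rather than $\nabla v_N$) leading to the $N^{-2}$-decay, while at the $L^2$-level this gain is unavailable and one is left with the uniform but non-decaying estimate claimed in the lemma. This also explains why in Theorem \ref{thm:corrupted-L2} the averaging in $t$ becomes necessary to filter out the noise, in sharp contrast with the situation of Theorem \ref{thm:corrupted}.
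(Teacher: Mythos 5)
Your first step (reducing to $\|f_N\|^4_{(L^2(\partial\Omega))^3}$ via Lemma \ref{lem:covariance}) and your treatment of the cutoff piece are fine, but the scaling bookkeeping for the oscillatory piece is wrong, and this is precisely where the real content of the lemma lies. The oscillatory contribution to $\nu\times\nabla v_N$ on the flat patch is $iN\,(\vec e_d\times\alpha)\,\eta(N^{1/2}|y'|)e^{iN\alpha'\cdot y'}$; after the rescaling $x'=N^{1/2}y'$ (Jacobian $N^{-1}$ in $d=3$) its squared $L^2$-norm is $N^2\cdot N^{-1}\int_{\R^2}\eta(|x'|)^2\,dx'=N\int\eta^2$, so this term alone contributes $\sim N^{1/2}$ to $\|f_N\|_{(L^2(\partial\Omega))^3}$, not $O(1)$. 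The "balance of $N$-powers" does not close: multiplying by the bounded matrices $M^t$ or $DF$ cannot fix a norm that is already growing. A telltale sign that something is missing is that your argument never invokes the $C^{1,1}$ regularity of $\partial\Omega$, which is the standing hypothesis of Theorem \ref{thm:corrupted-L2} and is exactly what the paper needs here.

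The paper's proof handles this term by a structural cancellation: it writes $f_N(F(x',0))$ via \eqref{term:f_N}, observes that the constant matrix at the base point annihilates the offending direction, $DF(0,0)(\vec e_d\times\alpha)=0$, and then replaces $DF(x',0)(\vec e_d\times\alpha)$ by $\bigl(DF(x',0)-DF(0,0)\bigr)(\vec e_d\times\alpha)$. The $C^{1,1}$ assumption gives $|DF(x',0)-DF(0,0)|\lesssim|x'|\lesssim N^{-1/2}$ on the support of $\psi(N^{1/2}\cdot)$, which knocks the oscillatory term down from $O(N)$ to $O(N^{1/2})$ pointwise and hence to $O(1)$ in $L^2$ after rescaling. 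Without this extra cancellation plus the Lipschitz control of $DF$, the uniform bound in \eqref{eqn:f_N_behaviour} is not obtained by scaling alone, so your proposal has a genuine gap at its central step. (Your closing remarks contrasting the $H^{-1}$ and $L^2$ settings are correct in spirit, but they do not repair this computation.)
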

	
\begin{proof}
To compute the $L^2$-norm of $f_N$, we could just take the part of $\partial \Omega$ inside the ball of radius $\rho$ and center $P$ since $f_N$ vanishes outside. This part of $\partial \Omega$ could be flatten and there the following identity would hold if $N^{-1/2} < 2r $
\[f_N = \nu \times E|_{\partial \Omega} = DF (\vec{e}_d \times \nabla v_N)|_{\partial \wt \Omega}. \]
A straightforward computation shows that
\[\vec{e}_d \times \nabla v_N(x', 0) = e^{iN \alpha \cdot (x', 0)} \big[ N^{1/2} \vec{e}_d \times \nabla \psi \big( N^{1/2} (x',0) \big) + i N (\vec{e}_d \times \alpha) \psi \big( N^{1/2} (x',0) \big) \big],\]
where $\psi(x) = \eta(|x'|) \eta(x_d)$. On the other hand, note that
\[ DF(x',0) = \left[ \begin{array}{ c c}
I_{d - 1} & 0 \\
\nabla^\prime \phi (x' + p') & 1
\end{array} \right], \qquad \vec{e}_d \times \alpha = \frac{(1 + |\nabla' \phi(p')|^2)}{|\nabla' \phi (p')|} \left[ \begin{array}{c}
-\partial_2 \phi (p') \\ \partial_1 \phi (p') \\ 0
\end{array} \right], \]
which implies that $DF(0,0) (\vec{e}_d \times \alpha) = 0$. Therefore, for $|x'| < 2r$, we have that
\begin{equation}
\begin{aligned}
f_N (F(x',0)) = &\ e^{iN \alpha \cdot (x', 0)} \big[ N^{1/2} DF(x',0) ( \vec{e}_d \times \nabla \psi) \big( N^{1/2} (x',0) \big) \\
 & + i N \big( DF(x',0) - DF(0,0) \big) (\vec{e}_d \times \alpha) \psi \big( N^{1/2} (x',0) \big) \big].
\end{aligned}
\label{term:f_N}
\end{equation}
Thus,
\begin{align*}
\| f_N \|_{(L^2(\partial \Omega))^3} \lesssim &\ N^{1/2} \| DF(x',0) ( \vec{e}_d \times \nabla \psi) \big( N^{1/2} (x',0) \big) \|_{(L^2(\R^2))^3} \\
& + N \| \big( DF(x',0) - DF(0,0) \big) (\vec{e}_d \times \alpha) \psi \big( N^{1/2} (x',0) \big) \|_{(L^2(\R^2))^3}.
\end{align*}
The first term on the right hand side is bounded by a constant independent of $N$ because the rate of shrinking of the support of $( \vec{e}_d \times \nabla \psi) (N^{1/2} x',0) $. To ensure that the second term is also bounded by a constant independent of $N$ we need an extra cancellation beside the shrinking of the support. This cancellation comes from the inequality $|DF(x',0) - DF(0,0)| \lesssim |x'|$, which is a consequence of the fact that $\partial \Omega$ is locally described by $C^{1,1}$ functions.
\end{proof}

\begin{lemma}\label{lem:avg}
We have that, for $T\geq 1$, there exists a $C > 0$ so that
\[\mathbb{E} \bigg| \frac{1}{T} \int_T^{2T} \sum_{\alpha \in \N^2} (f_{t^2}| \mathbf{e}_{\alpha_1}) (\overline{f_{t^2}}| \mathbf{e}_{\alpha_2}) X_\alpha \, dt \bigg|^2 \leq \frac{C}{T^{2/3}}.\]
The constant $C$ depends on upper bounds for the $C^{1,1}$ norm of the functions describing locally the boundary of $\partial \Omega$.
\end{lemma}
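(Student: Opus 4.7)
The plan is to reduce the random quantity to a deterministic oscillatory double integral and then to extract decay in $|t-s|$ via a single integration by parts in the flattening coordinates used in Lemma \ref{lem:f_N_norm}.

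First, set $Z_t := \sum_{\alpha\in\N^2}(f_{t^2}|\mathbf{e}_{\alpha_1})(\overline{f_{t^2}}|\mathbf{e}_{\alpha_2})X_\alpha$. Fubini gives
\[\E\Bigl|\frac{1}{T}\int_T^{2T} Z_t\, dt\Bigr|^2 = \frac{1}{T^2}\int_T^{2T}\!\int_T^{2T} \E(Z_t\overline{Z_s})\, dt\, ds.\]
From \eqref{eqn:GRV} and the independence of the $X_\alpha$ one has $\E(X_\alpha\overline{X_\beta}) = \delta_{\alpha_1\beta_1}\delta_{\alpha_2\beta_2}$, so a double application of Parseval in $(L^2(\partial\Omega))^3$ (as already exploited in Lemma \ref{lem:covariance}) collapses the covariance to $\E(Z_t\overline{Z_s}) = |(f_{t^2}|f_{s^2})|^2$. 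The lemma therefore reduces to the purely deterministic estimate $\int_T^{2T}\int_T^{2T}|(f_{t^2}|f_{s^2})|^2\, dt\, ds \lesssim T$.

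Next, using the explicit formula \eqref{term:f_N} in the flattening chart, I would rewrite
\[(f_{t^2}|f_{s^2}) = \int_{\R^{d-1}} e^{i(t^2-s^2)\alpha'\cdot x'}\, H_{t,s}(x')\, dx',\]
where $\alpha'$ is the tangential projection of the vector $\alpha$ from \eqref{term:choices} (for which one checks $|\alpha'|\geq 1$ uniformly), and $H_{t,s}$ bundles the two non-oscillatory amplitudes of \eqref{term:f_N} for $t$ and $s$ against each other, weighted by the surface measure factor $\sqrt{1+|\nabla'\phi|^2}$. The cancellation $DF(0,0)(\vec{e}_d\times\alpha)=0$ established in Lemma \ref{lem:f_N_norm}, combined with the Lipschitz bound $|DF(x',0)-DF(0,0)|\lesssim |x'|$ (which is exactly where the $C^{1,1}$ hypothesis on $\partial\Omega$ enters), ensures that $H_{t,s}$ is Lipschitz, supported in $\{|x'|\lesssim 1/T\}$, of pointwise size $O(T^2)$ and with weak gradient of size $O(T^3)$. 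Together with the trivial bound $|(f_{t^2}|f_{s^2})|\lesssim 1$ coming from Lemma \ref{lem:f_N_norm}, a single integration by parts against the linear phase then yields
\[|(f_{t^2}|f_{s^2})| \lesssim \min\!\left(1,\ \frac{1}{|t-s|}\right),\]
since $|t^2-s^2|\gtrsim T|t-s|$ and $\|\nabla H_{t,s}\|_{L^1(\R^{d-1})}\lesssim T^3\cdot T^{-(d-1)}=T$ when $d=3$.

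Finally, inserting this pointwise estimate into the double integral and changing variables to $u=t-s$ gives $\int\int \lesssim T\int_{-T}^T\min(1, u^{-2})\, du \lesssim T$; after division by $T^2$ we obtain a bound of order $1/T$, which is strictly stronger than the claimed $T^{-2/3}$ for $T\geq 1$. The hardest step in this plan is the amplitude bookkeeping underlying the integration by parts: one must verify, with only Lipschitz regularity of $DF$, that the apparently $O(t^2)$ term $t^2(DF(x',0)-DF(0,0))(\vec{e}_d\times\alpha)\psi(tx')$ and its weak derivative inherit the same scalings as the $O(t)$ term $t\,DF(x',0)(\vec{e}_d\times\nabla\psi)(tx')$, by means of the cancellation $|DF(x',0)-DF(0,0)|\lesssim 1/t$ on the support. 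Rademacher's theorem justifies the integration by parts at the level of weak derivatives; once the scalings are fixed, the remaining analysis is routine.
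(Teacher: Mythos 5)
Your proposal is correct, and it reaches the stated bound by the same core mechanism as the paper: reduce $\E|T^{-1}\int_T^{2T}Z_t\,dt|^2$ to the deterministic quantity $T^{-2}\int_{[T,2T]^2}|(f_{s^2}|f_{t^2})|^2\,d(s,t)$ via the covariance identity, write $(f_{s^2}|f_{t^2})$ as an oscillatory integral with the non-stationary linear phase $e^{i(s^2-t^2)\alpha'\cdot x'}$ coming from \eqref{term:f_N}, and perform one integration by parts, with the $C^{1,1}$ regularity entering exactly through $|DF(x',0)-DF(0,0)|\lesssim|x'|$ and the boundedness of the weak derivative of $DF$. Where you genuinely diverge is in how the resulting bound is integrated over the square. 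The paper introduces a splitting parameter $S$, separates a diagonal band $D(S)$ (estimated crudely by Cauchy--Schwarz and Lemma \ref{lem:f_N_norm}, contributing $S/T$) from the off-diagonal pieces $L(S)\cup R(S)$ (where it uses only the worst-case lower bound $|s^2-t^2|\geq ST$, contributing $1/S^2$), and optimizes $S=T^{1/3}$ to get $T^{-2/3}$. You instead keep the pointwise estimate $|(f_{s^2}|f_{t^2})|\lesssim\min(1,|t-s|^{-1})$ --- your amplitude bookkeeping ($H_{t,s}$ of size $O(T^2)$, gradient $O(T^3)$, support of measure $O(T^{-2})$, hence $\|\nabla H_{t,s}\|_{L^1}\lesssim T$ and $|t^2-s^2|\simeq T|t-s|$) checks out, and matches the paper's normalized form $st\int e^{i(s^2-t^2)\alpha'\cdot x'}a(x';s)b(x';t)\,dx'$ with $|\partial^\beta a|\lesssim(1+s)^{|\beta|}\chi(sx')$ --- and then integrate $\min(1,|t-s|^{-2})$ directly in $t-s$, which yields the stronger rate $C/T$. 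Since $T\geq1$ this implies the claimed $C\,T^{-2/3}$, so your argument is a valid (and in fact sharper) proof; the only price is that you must carry the $|t-s|$-dependence pointwise rather than hiding it behind the uniform bound on an off-diagonal region.
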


\begin{proof}
One can check that
\[\mathbb{E} \bigg| \frac{1}{T} \int_T^{2T} \sum_{\alpha \in \N^2} (f_{t^2}| \mathbf{e}_{\alpha_1}) (\overline{f_{t^2}}| \mathbf{e}_{\alpha_2}) X_\alpha \, dt \bigg|^2 = \frac{1}{T^2} \int_{Q_T} \big|(f_{s^2}|f_{t^2})\big|^2 \,d(s,t),\]
where $Q_T = [T, 2T]\times[ T, 2T]$. Consider $S \in (0, T/2)$ to be chosen later and split $Q_T$ in the sets
\begin{align*}
D(S) &= \{ (s,t) \in Q_T : t-S \leq s \leq t+S \}, \\
L(S) &= \{ (s,t) \in Q_T : T \leq s < t - S \}, \\
R(S) &= \{ (s,t) \in Q_T : t + S < s \leq 2T \}.
\end{align*}
Using Cauchy--Schwarz and the Lemma \ref{lem:f_N_norm}, we have that
\begin{equation}
\frac{1}{T^2} \int_{D(S)} \big|(f_{s^2}|f_{t^2})\big|^2 \,d(s,t) \lesssim \frac{|D(S)|}{T^2} \simeq \frac{S}{T},
\label{es:diagonal}
\end{equation}
since $|D(S)|$, the Lebesgue measure of $D(S)$ is of the order $ST$. We are now going to study the other pieces $L(S)$ and $R(S)$. Start by noticing that, using the expression \eqref{term:f_N}, the inner product $(f_{s^2} | f_{t^2})$ can be written as a sum of terms of the form
\begin{equation}
s t \int_{\R^2} e^{i(s^2 - t^2) \alpha' \cdot x'} a(x';s) b(x'; t) \, dx',
\label{int:oscillatory}
\end{equation}
where $| \partial^\beta a(x';s) | \lesssim (1 + s)^{|\beta|} \chi(s x')$ and $| \partial^\beta b(x';t) | \lesssim (1 + t)^{|\beta|} \chi(t x')$ with $\chi$ a compactly supported function in $\R^2$ and $\beta \in \N^2$ for $|\beta| \leq 1$. Since $D(S)$ contains the stationary points of the oscillatory integral \eqref{int:oscillatory}, we have that, in $L(S)$ and $R(S)$, its phase is non-stationary. Then, write
\[e^{i(s^2 - t^2) \alpha' \cdot x'} = \frac{-i}{|\alpha'|^2 (s^2 - t^2)} \alpha' \cdot \nabla  e^{i(s^2 - t^2) \alpha' \cdot x'} \]
in order to count the oscillations. Thus, the absolute value of \eqref{int:oscillatory} can be bounded, modulo a multiplicative constant, by
\[\frac{st}{|s^2 - t^2|} \int_{\R^2} |\nabla a(x'; s)| |b(x'; t)| + |a(x'; s)| |\nabla b(x'; t)| \, dx' \]
which in term is bounded, again modulo a multiplicative constant, by
\begin{equation}
\frac{1 + s + t}{|s^2 - t^2|} st \int_{\R^2} \chi(sx') \chi(tx')\, dx' \lesssim \frac{1 + s + t}{|s^2 - t^2|}.
\label{es:boundednessLR}
\end{equation}
In the last inequality, we have used Cauchy--Schwarz. In $R(S)$, $s^2 - t^2 > 0$ since
\[ s^2 - t^2 > (t +S)^2- t^2 = 2St + S^2 > tS \geq ST. \]
Hence, $|s^2 - t^2| \geq ST$.
On the other hand, in $L(S)$, $t^2 - s^2 > 0$ since
\[ t^2 - s^2 > t^2- (t - S)^2 = 2St - S^2 > tS \geq ST. \]
Again, $|s^2 - t^2| \geq ST$.
Thus, by the fact that $(f_{s^2} | f_{t^2})$ can be written as a sum of terms of the form \eqref{int:oscillatory}, and these in turn can be bounded by the right-hand side of \eqref{es:boundednessLR}, we have that
\begin{equation}
\frac{1}{T^2} \int_{L(S) \cup R(S)} \big|(f_{s^2}|f_{t^2})\big|^2 ~d(s,t) \lesssim \frac{|L(S) \cup R(S)|}{T^2} \frac{T^2}{S^2 T^2} \lesssim \frac{1}{S^2}
\label{in:off_diagonal}
\end{equation}
since $|L(S) \cup R(S)| \lesssim T^2$. Choosing $S = T^{1/3}$ to make the decays in \eqref{es:diagonal} and \eqref{in:off_diagonal} of the same order, we have the inequality stated in the lemma.
\end{proof}

\begin{proof}[Proof of the Theorem \ref{thm:corrupted-L2}]
The proof basically follows the proof of Theorem \ref{thm:corrupted} by applying Proposition \ref{Prop auxilliary in max} to the sequence of random variables
\[\left\{\frac{1}{T_N}\int_{T_N}^{2T_N}\sum_{\alpha \in \N^2} (f_{t^2}| \mathbf{e}_{\alpha_1}) (\overline{f_{t^2}}| \mathbf{e}_{\alpha_2}) X_\alpha ~ dt:N\in\mathbb N\backslash\{0\}\right\}\]
and by applying
\[\mathbb E\left|\frac{1}{T_N}\int_{T_N}^{2T_N}\sum_{\alpha \in \N^2} (f_{t^2}| \mathbf{e}_{\alpha_1}) (\overline{f_{t^2}}| \mathbf{e}_{\alpha_2}) X_\alpha ~ dt \right|^2\leq \frac{C}{N^{2+\theta}}\rightarrow 0\]
as $N\rightarrow\infty$, 
obtained using Lemma \ref{lem:avg} and $\lambda_N=N^{-\theta/2}$. Note that the $C_{\partial\Omega}$ (used in control $N_0$) is replaced by constants in Lemma \ref{lem:avg}, which depend on $\partial\Omega$, lower bounds for $\varepsilon_0$ and $\mu_0$, and upper bounds for $\|\gamma\|_{\rm Lip(\overline{\Omega})}$ and $\|\mu\|_{\rm Lip(\overline\Omega)}$, respectively.
\end{proof}

\section{Boundary determination of Lam\'e moduli with corrupted data}\label{sec:4}
In this section, assuming that the data has measurement error as in section \ref{sec:3}, we reconstruct the boundary value of Lam\'e parameters and its rates of convergence formula for the isotropic elasticity system. 

Hereafter, we will consider the problem in $\R^3$. 
Let $\Omega \subset \R^3$ be a bounded domain, $\lambda(x)$ and $\mu(x)$ be the Lam\'e parameters satisfying the strong convexity condition in \eqref{strong convexity}. The regularity assumptions of the boundary $\partial \Omega$ and the Lam\'e parameters $(\lambda,\mu)$ will be described later. 

We use the same notations as in Section \ref{sec:2}. Given $P=(p',p_3) \in \p \Omega$ and $x=(x',x_3)$, let $\phi:\R^2 \to \R$ be the Lipschitz function and $(z',z_3)=F(x',x_3)=(x'+p',x_3+\phi(x'+p'))$ be the boundary flatten map near $P\in \p \Omega$. The matrix $M$ is defined in \eqref{eqn:Jacobi} with $\det M(x)=1$.
Let $\wt \Omega=F^{-1}(\Omega)$.

Let $u$ be the solution to the elasticity system \eqref{Lame system} associated to the tensor $\mathbf C$. By a change of coordinates, the function
$\wt u(x):=u(F(x))$ solves a new elasticity system  
\begin{align}
\nabla \cdot (\widetilde{\mathbf{C}}\nabla \wt u)=0 \text{ in }\wt\Omega ,
\end{align}
where we have utilized that 
$$
0=\int_\Omega \mathbf{C}\nabla u:\nabla \phi~ dz = \int_{\wt \Omega }\widetilde{\mathbf{C}}\nabla \wt u : \nabla \wt \phi ~dx, \text{ for any smooth test function }\phi.
$$
Here $\tilde{\mathbf{C}}$ is the elastic tensor expressed as
\begin{align}\label{reprensentation of tilde C}
\widetilde{\mathbf{C}}(x)=M(x)\otimes\mathbf{C}(F(x))\otimes M(x)^t,
\end{align}
where $\otimes$ denotes the multiplication between a fourth-order rank tensor and a matrix. In particular, the function $\widetilde{\mathbf{C}}=(\widetilde{C}_{iqkp})_{1\leq i,q,k,p\leq 3}$ can be explicitly written as 
\begin{align}\label{componentwise representation}
\widetilde{C}_{iqkp}=\left.\sum_{l,j=1}^3 C_{ijkl}\dfrac{\p x_p}{\p z_l}\dfrac{\p x_q}{\p z_j}\right|_{z=F(x)}.
\end{align} 
Moreover, $\widetilde{\mathbf{C}}$ satisfies the strong convexity condition \eqref{strong convexity}, but with a different positive lower bound. Note that the new elastic tensor $\widetilde{\mathbf{C}}$ will lose the minor symmetric property, but we can still reconstruct its coefficients at the boundary.
Use a change of variable again, then we have 
\begin{align}\label{DN identity}
\int_{\p \Omega}\Lambda_{\mathbf{C}}f\cdot \overline{f}~dS=&\int_\Omega \mathbf{C}\nabla u:\nabla \overline{u}~ dz =\int_{\wt \Omega}\widetilde{\mathbf{C}} \nabla \wt u :\nabla \overline{\wt u}~ dx,
\end{align}
where $\Lambda_{\mathbf C}$ is the Dirichlet-to-Neumann map defined by \eqref{eqn:DN-map_elasticity} and $:$ denotes the Frobenius product between two matrices.

\subsection{Approximate solution and elliptic estimate}
We first give a reconstruction formula for the Lam\'e parameters $\lambda$ and $\mu$ on the surface.

Recall that $\eta: \R\to [0,1]$ be a smooth cutoff function given in Section \ref{sec:2}.
Let $\omega\in\R^3$, depending on $x'$, be chosen such that
\begin{equation}\label{eqn:w-en}
\begin{split}&|M(x',0)^t\omega|= |M(x',0)^t\vec e_3|,\\ 
&\omega\cdot M(x',0) M(x',0)^t\vec e_3=0,\end{split}
\end{equation}
where $\vec e_3=(0,0,1)$.

Given any vector $\mathbf{a}=(a_1,a_2,a_3)\in \C^3$, for any integer $N\geq 1$, we define a family of approximation solutions of $\wt u$ by
\begin{align*}
\wt G_{N}(y)=\eta(N^{1/2}|y'|)\eta(N^{1/2}y_3)e^{N(\sqrt{-1} \omega - \vec e_3)\cdot (y-(x',0))} \mathbf{a}
\end{align*}
in a similar spirit for the Maxwell system in section \ref{sec. 2.2}, see also \cite[Section 2.3.2.1]{tanuma2007stroh}.
Because of the need to use $i$ as a summation index, we let $\sqrt{-1}$ denote the imaginary unit.
From now on, without loss of generality, we assume that $x'=0$. Then $\omega$ satisfies \eqref{eqn:w-en} and $\omega=(\omega_1,\omega_2,0)$.
Similar to the notations introduced in section \ref{sec:2}, we denote 
$$
   \psi_N(y) = \eta(N^{1/2}|y'|)\eta(N^{1/2}y_3),\ \ e_N(y) =  e^{N(\sqrt{-1} \omega - \vec e_3)\cdot y} ,
$$
then we can express $\wt G_N$ as
\begin{align}\label{0-order approx. sol}
\wt G_{N}(y)  =\psi_N(y)e_N(y) \mathbf{a}.
\end{align}

In what follows, 
we first apply the gradient of the approximate solution 
$\{\nabla \wt G_N\}_{N=1}^\infty$ in the integral \eqref{elasticity expansion} in Lemma \ref{Lem of approx sol change of vari} and then find out that its first term dominates the whole behavior. 
This observation will play an essential role in providing the reconstruction formula for $\widetilde{\mathbf{C}}(0)$ in section \ref{sec 4.2} assuming the boundary measurements are corrupted. 

\begin{lemma}\label{Lem of approx sol change of vari}
	Let $\lambda,\mu$ be the Lipschitz continuous Lam\'e moduli satisfying the strong convexity condition \eqref{strong convexity}.  The four tensor $\widetilde{\mathbf{C}}$ is defined by \eqref{reprensentation of tilde C} and $\nabla' \phi(p')$ exists. Then we have 
	\begin{align}\label{elasticity expansion}
	&\int_{\wt\Omega}\widetilde{\mathbf{C}}\nabla \wt G_{ N}:\nabla \overline{\wt G_{N}} ~dy\\ 
	 =&\ \notag  \sum_{i,j,k,l=1}^3 C_{ijkl}(F(0)) a_k \overline{a_i}\left(\sum_{p,q=1}^2\dfrac{\p y_p}{\p z_l}(0)\dfrac{\p y_q}{\p z_j}(0)\omega_p \omega_q + \dfrac{\p y_3}{\p z_l}(0)\dfrac{\p y_3}{\p z_j}(0)\right) \\
     \quad &\notag \times \int_{\R^2}\eta (|y'|)^2dy'  + O\left(e^{ -\frac{1}{2}N^{1/2}}\right) \\
     \quad& \notag+O\left(N^{-1/2} +\left( N \int_{|y'|\leq N^{-1/2}}|\nabla' \phi(y'+p')-\nabla'\phi(p')|^2~dy'\right)^{1/2} \right),
	\end{align}
	where $\wt G_{N}$ is the approximation solution defined by \eqref{0-order approx. sol} and recall that $\nabla'\phi:=(\partial_1\phi, \partial_2\phi)^t$.
\end{lemma}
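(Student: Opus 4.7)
The plan is to adapt the strategy from the proof of Theorem \ref{thm:gamma} for Maxwell's equations, exploiting the concentration of $\wt G_N=\psi_N e_N\mathbf{a}$ near $y=0$. Differentiating gives
\[(\nabla \wt G_N)_{kl} = a_k\left[N\psi_N(\sqrt{-1}\omega-\vec e_3)_l + \partial_l\psi_N\right]e_N,\]
so the integrand $\widetilde{\mathbf{C}}\nabla\wt G_N:\nabla\overline{\wt G_N}$ splits into a dominant piece of size $O(N^2\psi_N^2 e^{-2Ny_3})$, cross terms of size $O(N\psi_N|\nabla\psi_N|e^{-2Ny_3})$, and a lowest-order piece of size $O(|\nabla\psi_N|^2 e^{-2Ny_3})$. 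The first step is to show that the non-dominant pieces contribute only to the error. Any term containing $\partial_{y_3}\psi_N$ is supported on $\{y_3\geq \tfrac12 N^{-1/2}\}$, where $e^{-2Ny_3}\leq e^{-N^{1/2}}$, which yields an exponentially small error $O(e^{-N^{1/2}/2})$. The tangential derivatives $\partial_{y_p}\psi_N$ ($p=1,2$) do not carry exponential decay, but the same scaling computation as in \eqref{eqn:est2-term1} shows that their integrated contribution is $O(N^{-1/2})$.

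Next I would freeze $\widetilde{\mathbf{C}}(y)$ at $y=0$ in the dominant term. Using the product formula \eqref{reprensentation of tilde C}, $\widetilde{\mathbf{C}}(y)-\widetilde{\mathbf{C}}(0)$ decomposes as the sum of a term proportional to $\mathbf{C}(F(y))-\mathbf{C}(F(0))$ and a term measuring the change in $M(y)M(y)^t$ between $y$ and $0$; this is the direct analogue of the decomposition performed between \eqref{es:second_term} and \eqref{equ:max}. The Lipschitz continuity of the Lam\'e parameters gives $|\mathbf{C}(F(y))-\mathbf{C}(F(0))|\lesssim |y|$, and integration against $N^2\psi_N^2 e^{-2Ny_3}$ then contributes $O(N^{-1/2})$ (the $|y_3|$ component being even smaller). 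Since $M(y)-M(0)$ depends only on the increment $\nabla'\phi(y'+p')-\nabla'\phi(p')$, Cauchy--Schwarz together with the scaling of the support of $\psi_N$ in $y'$ produces exactly
\[\left(N\int_{|y'|<N^{-1/2}}|\nabla'\phi(y'+p')-\nabla'\phi(p')|^2\,dy'\right)^{1/2},\]
which vanishes at Lebesgue points of $\nabla'\phi$ thanks to \eqref{lim:lebesgue_point}.

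It then remains to evaluate the constant-coefficient leading term explicitly. The change of variable $y'\mapsto N^{-1/2}y'$ turns $N\int\eta(N^{1/2}|y'|)^2\,dy'$ into $\int_{\R^2}\eta(|y'|)^2\,dy'$, while $\int_0^\infty\eta(N^{1/2}y_3)^2 e^{-2Ny_3}\,dy_3=(2N)^{-1}+O(e^{-N^{1/2}})$. Contracting the vector $\sqrt{-1}\omega-\vec e_3$ against $\widetilde{\mathbf{C}}(0)$ via \eqref{componentwise representation} and simplifying with the conditions \eqref{eqn:w-en} on $\omega$ then produces the explicit prefactor displayed in \eqref{elasticity expansion}. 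The principal difficulty throughout is the tangential variation of $M$: because $\phi$ is only Lipschitz, the increment $\nabla'\phi(y'+p')-\nabla'\phi(p')$ admits no pointwise control, so one must rely on an averaged $L^2$ bound that decays only at Lebesgue points. This is the same obstruction already resolved in the Maxwell case, and the scaling argument transfers directly to the elastic tensor.
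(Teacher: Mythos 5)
Your proposal is correct and follows essentially the same route as the paper: expand $\nabla \wt G_N$, isolate the $N^2\psi_N^2e^{-2Ny_3}$ piece as dominant (the paper's term $I$), dispose of the cutoff-derivative contributions by the support/exponential-decay and scaling arguments (the paper's $III$ and $IV$), freeze the coefficients via the Lipschitz bound on $\mathbf{C}\circ F$ together with a Cauchy--Schwarz bound on the $\nabla'\phi$ increment (the paper's $II_1+II_2$, identical up to a cosmetic reshuffling of which part of the variation is estimated pointwise), and evaluate the leading integral by rescaling. The only step you leave at the same level of detail as the paper is the algebraic simplification showing that the mixed tangential--normal contractions of $\sqrt{-1}\omega-\vec e_3$ against $\widetilde{\mathbf{C}}(0)$ reduce to the displayed prefactor, which both you and the paper defer to the conditions \eqref{eqn:w-en} and the computation in the flat case.
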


For the flat case (i.e., $0\in \p \Omega$ with $\Omega=\{z_3>0\}$ near $0$), the previous lemma was proved in \cite[Section 2]{tanuma2007stroh}. The first term in the right hand side of \eqref{elasticity expansion} is the dominant term of the boundary determination, while the remaining parts are lower order terms. For the completeness of the paper, we provide a detailed proof below.

\begin{proof}[Proof of Lemma \ref{Lem of approx sol change of vari}]
	Following the idea in the proof of \cite[Lemma 1]{brown2001recovering}, we first note that 
	\begin{align}\label{integral formula}
	\int_{\wt \Omega} \widetilde{\mathbf{C}}\nabla \wt G_{N}:\nabla \overline{\wt G_{N}} ~dy =  \sum_{i,q,k,p=1}^3   \int_{\wt \Omega}\widetilde{C}_{iqkp}(y)\dfrac{\p (\wt G_{N})_k}{\p y_p}\dfrac{\p (\overline{\wt G_{N}})_i}{\p y_q}~dy,
	\end{align}
	where $\wt G_N = ((\wt G_N)_1, (\wt G_N)_2,(\wt G_N)_3)$.
	For $k=1,2,3$, by a direct computation, the partial derivatives of $(\wt G_N)_k$ are 
	\begin{align}\label{1-2 components}
	\dfrac{\p (\wt G_{N})_k}{\p y_p}=  \left( N^{1/2}\eta'(N^{1/2} |y'|)\eta(N^{1/2}y_3){y_p\over |y'|} + \sqrt{-1} N \omega_p \psi_N(y)\right)e_N(y)a_k,
	\end{align}
	for $p=1,2$ and 
	\begin{align}\label{3rd component}
	\dfrac{\p (\wt G_{N})_k}{\p y_3}=  \left( N^{1/2}\eta (N^{1/2} |y'|)\eta'(N^{1/2}y_3) -N  \psi_N(y)\right)e_N(y)a_k.
	\end{align}
	
	Next, substituting \eqref{componentwise representation}, \eqref{1-2 components} and \eqref{3rd component} into the identity \eqref{integral formula}, then one  obtain 
	\begin{align*}
	 &\int_{\wt \Omega} \widetilde{\mathbf{C}}\nabla \wt G_{N}:\nabla \overline{\wt G_{N}} ~dy \\
	 =&\ \sum_{i,j,k,l,q,p=1}^3 \int_{\wt \Omega} C_{ijkl}(F(y))\dfrac{\p y_p}{\p z_l}\dfrac{\p y_q}{\p z_j}\dfrac{\p (\wt G_{N})_k}{\p y_p}\dfrac{\p (\overline{\wt G_{N}})_i}{\p y_q}~dy\\
	 =:&\  I + II + III + IV,
	\end{align*}	
	where
	\begin{align*}
	I &=  N^2 \sum_{i,j,k,l=1}^3\int_{\wt \Omega}C_{ijkl}(F(0)) \left(\sum_{p,q=1}^2\dfrac{\p y_p}{\p z_l}(0)\dfrac{\p y_q}{\p z_j}(0)\omega_p \omega_q + \dfrac{\p y_3}{\p z_l}(0)\dfrac{\p y_3}{\p z_j}(0) \right)\\
	& \quad \times \eta(N^{1/2}|y'|)^2\eta(N^{1/2}y_3)^2 e^{-2Ny_3} a_k \overline{a_i} ~dy,\\
	II &=   N^2 \sum_{i, k=1}^3\left[ \sum_{p,q=1}^2\int_{\wt \Omega}\left(\wt C_{iqkp}(y)-\wt C_{iqkp}(0)\right) \omega_p \omega_q   +\int_{\wt \Omega}\left(\wt C_{i3k3}(y)-\wt C_{i3k3}(0)\right)\right]\\
	& \quad \times \eta(N^{1/2}|y'|)^2\eta(N^{1/2}y_3)^2 a_k \overline{a_i}~ dy, \\
	III &=  N^{3/2}\sum_{i,j,k,l=1}^3\int_{\wt \Omega}C_{ijkl}(F(y)) 
	\Big(-2\sum_{p =1}^2\dfrac{\p y_p}{\p z_l}\dfrac{\p y_3}{\p z_j}\psi_N(y) \eta'(N^{1/2}|y'|) \\
	&\quad\times \eta (N^{1/2}y_3) {y_p\over|y'|}   -2 \dfrac{\p y_3}{\p z_l}\dfrac{\p y_3}{\p z_j}\psi_N(y)\eta(N^{1/2}|y'|)\eta'(My_3)\Big) e^{-2Ny_3} a_k \overline{a_i} ~dy,
	\end{align*}

	and 
	\begin{align*}
	IV &=  N\sum_{i,j,k,l=1}^3\int_{\wt \Omega} C_{ijkl}(F(y))\Big(\sum_{p,q=1}^2\dfrac{\p y_p}{\p z_l}\dfrac{\p y_q}{\p z_j} \eta'(N^{1/2}|y'|)^2 \eta(N^{1/2} y_3)^2 {y_py_q\over |y'|^2}  \\
	& \quad \left.+ \sum_{p =1}^2\dfrac{\p y_p}{\p z_l}\dfrac{\p y_3}{\p z_j} 2\eta'(N^{1/2}|y'|)\eta(N^{1/2}|y'|)\eta(N^{1/2}y_3)\eta'(N^{1/2}y_3) {y_p\over |y'|}\right.\\
	&\quad + \dfrac{\p y_3}{\p z_l}\dfrac{\p y_3}{\p z_j}  \eta'(N^{1/2}y_3)^2\eta(N^{1/2}|y'|)^2\Big)
	e^{-2Ny_3} a_k \overline{a_i} ~dy.
	\end{align*}
	
	We will show that $I$ is the dominant term and $II$, $III$, $IV$ are remainder terms in the following arguments. We first estimate $I$. By using the integration by parts with respect to the $y_3$ variable and applying change of variables, we obtain  	
	\begin{align*}
	I  
	& = \sum_{i,j,k,l=1}^3 C_{ijkl}(F(0)) a_k \overline{a_i}\left(\sum_{p,q=1}^2\dfrac{\p y_p}{\p z_l}(0)\dfrac{\p y_q}{\p z_j}(0)\omega_p \omega_q + \dfrac{\p y_3}{\p z_l}(0)\dfrac{\p y_3}{\p z_j}(0)\right) \\
	&\quad \times \int_{\R^2}\eta(y')^2~dy'  + O\left( e^{ -\frac{1}{2}N^{1/2}}\right).	
	\end{align*}
	
 Secondly, by using change of variables again and following a similar argument as in the proof of \cite[Lemma 1]{brown2001recovering}, one can derive that 
	$$
	III = O\left(N^{-1/2}\right) \text{ and }IV = O\left(N^{-1/2}\right).
	$$
	
	Finally, for the second term $II$, the triangle inequality yields that 
	\begin{align}\label{estimate II}
	 |II|   
		\leq  II_1 +II_2,
	\end{align}
where 
\begin{align*}
II_1   \lesssim &\ N^2\|\nabla F^{-1}\|^{2}_\infty \sum_{i,j,k,l=1}^3\int_{\wt \Omega}\left| C_{ijkl}(F(y))-  C_{ijkl}(F(y',0))\right| \\ 
 & \times\eta(N^{1/2}|y'|)^2\eta(N^{1/2}y_3)^2e^{-2Ny_3}~ dy, \\
II_2  \lesssim &\ N \sum_{i, k =1}^3\int _{\R^2}\Big(\sum^2_{p,q=1}|\wt C_{iqkp}(y',0)-\wt C_{iqkp}(0)  |  \\
&  + |\wt C_{i3k3}(y',0)-\wt C_{i3k3}(0)|\Big) \eta(N^{1/2}|y'|)^2~dy',
\end{align*}
for some constant $C>0$ independent of $N$. Here we have utilized that $|\omega_p|\leq 1$ for $p=1,2$ (recalling that $\omega=(\omega_1,\omega_2,0)$ is a unit vector) and $a_j$'s are complex numbers for $j=1,2,3$. 

To establish \eqref{estimate II}, we will estimate $II_1$ and $II_2$ separately.
For $II_1$, 
we choose a constant $\lambda>0$ and split the region of integral into two parts, namely, $\{y_3>\lambda\}$ and $\{y_3<\lambda\}$. 
Thus, one obtains, by following a similar argument as in \eqref{equ:max}, that
\begin{align}\label{estimate II_1}
|II_1| \lesssim o(1) 
\end{align}	
when $N\rightarrow \infty$.

On the other hand, for $II_2$, by Cauchy-Schwartz inequality, one can derive 
\begin{align*}
|II_2| &\lesssim \Big( N \int_{|y'|\leq N^{-1/2}}  |\wt C_{iqkp}(y',0)-\wt C_{iqkp}(0) |^2 + |\wt C_{i3k3}(y',0)-\wt C_{i3k3}(0) |^2 ~dy'\Big)^{1/2} \notag \\
&\leq  \Big(N \int_{|y'|\leq N^{-1/2}}|\nabla'\phi(y'+p')-\nabla'\phi(p')|^2  ~dy'\Big)^{1/2}.
\end{align*}	 
Using \eqref{lim:lebesgue_point}, it leads to
\begin{align}\label{estimate II_2}
|II_2| \lesssim	o(1). 
\end{align}
We substitute \eqref{estimate II_1} and \eqref{estimate II_2} into \eqref{estimate II}. We combine the estimates for $I$ to $IV$, then we complete the proof.
	
\end{proof}

We denote 
$$
    \kappa:= \int_{\R^2}\eta(|y'|)^2 ~dy',
$$
by a direct computation and 
let $N\rightarrow \infty$, then the main term $I$ satisfies
\begin{align}\label{reconstruction formula of elasticity}
\notag I &\rightarrow  \kappa \sum_{i,j,k,l=1}^3 C_{ijkl}(F(0)) a_k \overline{a_i}\left(\sum_{p,q=1}^2\dfrac{\p y_p}{\p z_l}(0)\dfrac{\p y_q}{\p z_j}(0)\omega_p \omega_q + \dfrac{\p y_3}{\p z_l}(0)\dfrac{\p y_3}{\p z_j}(0)\right) \\
&=  \kappa \sum_{i,j=1}^3 Z_{ij}(P)a_i \overline{a_j},
\end{align}
where $Z(P)=(Z_{ij})_{1\leq i,j\leq 3}(P)$ is the $2$-tensor defined by \eqref{eq:Z_ij}.
For more detailed analysis about the boundary reconstruction for the isotropic elasticity system without noisy, we refer readers to \cite[Section 2]{tanuma2007stroh}.

Similar to \cite[Lemma 2.2]{caro2017calderon}, we have an analogues result for the elasticity system.

\begin{lemma}\label{Lem:elliptic estimate for elasticity}
Let $\mathbf{C}$ be a Lipschitz continuous isotropic elastic tensor given by \eqref{elastic four tensor}, which satisfies \eqref{strong convexity}. Let $\widetilde{\mathbf{C}}$ be the elastic four tensor defined by \eqref{reprensentation of tilde C} and $\nabla' \phi(p')$ exists. Let $\wt r_{N}$ be the solution of 
\begin{align*}
\begin{cases}
\nabla \cdot (\widetilde{\mathbf{C}}\nabla \widetilde{r}_{N})= -\nabla \cdot (\widetilde{\mathbf{C}}\nabla \widetilde{G}_{N}) & \text{ in }\widetilde\Omega, \\
\wt r_{N} =0 & \text{ on }\partial \widetilde\Omega,
\end{cases}
\end{align*}
where $\widetilde{G}_{N} \in (H^1(\widetilde{\Omega}))^3$ is the approximate solution defined by \eqref{0-order approx. sol}. If $\nabla' \phi(p')$ exists, then one has 
\begin{align}\label{elliptic estimate for r_N}
&\quad \|\nabla \wt r_{N}\|_{(L^2(\widetilde{\Omega}))^3	}  \notag \\ 
&\lesssim N^{-1/2}  +N^{1/2}\left(\int_{|y'|\leq N^{-1/2}}|\nabla'\phi(y'+p')-\nabla'\phi(p')|^2~ dy'\right)^{1/2}  ,
\end{align}
for some constant $C>0$ independent of $\widetilde{G}_{N}$ and $\widetilde{r}_{N}$.
\end{lemma}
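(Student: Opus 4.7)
The plan is to mirror the strategy used in the proof of Theorem~\ref{thm:gamma}: use the coercivity of the bilinear form associated to $\widetilde{\mathbf C}$ to reduce the bound to estimating $|\int\widetilde{\mathbf C}\,\nabla\widetilde G_N:\nabla\widetilde r_N\,dy|$, split that integrand into pieces that already decay in $L^2$ plus a principal-symbol piece that does not, and handle the principal-symbol piece by the same curl-potential integration-by-parts trick that was used for $\widetilde\gamma(0)\,\psi_N\nabla e_N$ in the Maxwell setting.

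First I would test the weak formulation of the equation for $\widetilde r_N$ against $\widetilde r_N\in H_0^1(\widetilde\Omega)$ and invoke Korn's inequality together with the strong convexity that $\widetilde{\mathbf C}$ inherits via \eqref{reprensentation of tilde C}, obtaining
\[\|\nabla\widetilde r_N\|_{L^2(\widetilde\Omega)}^2 \lesssim \left|\int_{\widetilde\Omega}\widetilde{\mathbf C}\,\nabla\widetilde G_N:\nabla\widetilde r_N\,dy\right|.\]
Using $\widetilde G_N=\psi_N e_N\mathbf a$, I would split the integrand as $\widetilde{\mathbf C}(y)\nabla\widetilde G_N=A_1+A_2+A_3$, where
\[A_1=\widetilde{\mathbf C}(y)(\mathbf a\otimes e_N\nabla\psi_N),\quad A_2=\bigl[\widetilde{\mathbf C}(y)-\widetilde{\mathbf C}(0)\bigr](\mathbf a\otimes\psi_N\nabla e_N),\quad A_3=\widetilde{\mathbf C}(0)(\mathbf a\otimes\psi_N\nabla e_N).\]

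For $A_1$ and $A_2$, Cauchy--Schwarz reduces matters to controlling $L^2$-norms, which are carried out exactly as the estimates for the terms $IV$, $II_1$ and $II_2$ in the proof of Lemma~\ref{Lem of approx sol change of vari}. The outcome is $\|A_1\|_{L^2}\lesssim N^{-1/2}$, while the Lipschitz regularity of $\widetilde{\mathbf C}$ combined with the Lebesgue-point property \eqref{lim:lebesgue_point} gives $\|A_2\|_{L^2}\lesssim N^{-1/2}+N^{1/2}\bigl(\int_{|y'|\leq N^{-1/2}}|\nabla'\phi(y'+p')-\nabla'\phi(p')|^2\,dy'\bigr)^{1/2}$; both fit inside the right-hand side of \eqref{elliptic estimate for r_N} after multiplication by $\|\nabla\widetilde r_N\|_{L^2}$.

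The main obstacle is the principal-symbol piece $A_3$, whose $L^2$-norm is only $O(1)$ and thus cannot be handled by direct Cauchy--Schwarz. To deal with it I would use the choice of $\omega$ in \eqref{eqn:w-en}, which as in the Maxwell proof yields the key algebraic identity $\widetilde B\cdot\widetilde B=0$ for $\widetilde B=M(0)^t(\sqrt{-1}\omega-\vec e_3)$; together with the Stroh eigenvalue condition $\sum_{q,k,p}\widetilde C_{iqkp}(0)\,a_k B_q B_p=0$ on $\mathbf a$ (cf.\ \cite[Section~2.3.2.1]{tanuma2007stroh}), this forces $\nabla\cdot\bigl(\widetilde{\mathbf C}(0)\nabla(e_N\mathbf a)\bigr)=0$. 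By the Poincar\'e lemma applied row-by-row, there is then a tensor potential $L_N$ with $|L_N|\lesssim e^{-Ny_3}$ satisfying $\widetilde{\mathbf C}(0)\nabla(e_N\mathbf a)=\nabla\times L_N$. Rewriting $\int A_3:\nabla\widetilde r_N=\int\psi_N(\nabla\times L_N):\nabla\widetilde r_N$ and integrating by parts, the boundary terms vanish because $\widetilde r_N=0$ on $\partial\widetilde\Omega$ (a stronger condition than the tangential one used in Theorem~\ref{thm:gamma}), and the ``pure curl'' interior contribution vanishes by antisymmetry of $\nabla\times$ against the symmetric $\partial_q\partial_j$. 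The only surviving term, $-\int(\nabla\psi_N\cdot L_N):\nabla\widetilde r_N$, is then bounded by $\|\nabla\psi_N\cdot L_N\|_{L^2}\,\|\nabla\widetilde r_N\|_{L^2}\lesssim N^{-1/2}\|\nabla\widetilde r_N\|_{L^2}$ through the calculation parallel to \eqref{eqn:est2-term1}. Summing the three contributions and dividing through by $\|\nabla\widetilde r_N\|_{L^2}$ delivers \eqref{elliptic estimate for r_N}; the delicate point, which is the main obstacle, is the construction of $L_N$ with the required exponential decay and the careful bookkeeping of boundary contributions on the Lipschitz domain $\widetilde\Omega$.
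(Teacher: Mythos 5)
Your proposal is correct in outline, but it takes a genuinely different route from the paper's. The paper disposes of this lemma in one line by deferring to \cite[Lemma 2]{brown2001recovering}: the standard elliptic estimate reduces everything to bounding $\|\nabla\cdot(\widetilde{\mathbf C}\nabla\widetilde G_N)\|_{H^{-1}(\widetilde\Omega)}$, and after the same three-way splitting you use, the principal piece $\widetilde{\mathbf C}(0)(\mathbf a\otimes\psi_N\nabla e_N)$ is handled by moving the divergence onto the test function and controlling the leftover term $\int\varphi\,\widetilde{\mathbf C}(0)\nabla e_N\cdot\nabla\psi_N\,dy$ (which carries a full factor of $N$ against an undifferentiated $\varphi$) via Hardy's inequality $\|\varphi/y_3\|_{L^2}\lesssim\|\partial_3\varphi\|_{L^2}$. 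You instead transplant the curl-potential device from the proof of Theorem~\ref{thm:gamma}: writing the principal piece as $\psi_N\nabla\times L_N$ with $|L_N|\lesssim e^{-Ny_3}$ absorbs one power of $N$ into the potential, so the surviving term $\int L_N\cdot(\nabla\psi_N\times\nabla\overline{\wt r_N})\,dy$ is controlled by plain Cauchy--Schwarz and the computation of \eqref{eqn:est2-term1}, with no Hardy inequality. Your treatment of the boundary term is also sound: since $\wt r_N\in H^1_0$, its trace gradient is normal, hence orthogonal to $\nu\times L_N$. Both routes yield the same rate; yours has the merit of being self-contained within the paper (it is literally the ``new trick'' of Section~\ref{sec. 2.2} applied to elasticity), which is somewhat ironic given that the paper advertises that trick as a substitute for Hardy's inequality in the Maxwell setting but then falls back on Hardy for elasticity. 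One cosmetic caution: since $\widetilde{\mathbf C}$ loses the minor symmetry, Korn's inequality should not be applied to $\widetilde{\mathbf C}$ directly; pull the quadratic form back to $\Omega$ through $F$, where $\mathbf C$ is fully symmetric, and apply Korn there.

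The one substantive point is the divergence-free condition underlying your potential $L_N$. For the isotropic tensor, with $B=M(0)^t(\sqrt{-1}\,\omega-\vec e_3)$ and $B\cdot B=0$ guaranteed by \eqref{eqn:w-en}, a direct computation gives
\begin{equation*}
\sum_{q,p=1}^3\widetilde C_{iqkp}(0)\,a_k\,d_q d_p=\sum_{j,l=1}^3 C_{ijkl}(F(0))\,a_k B_jB_l=(\lambda+\mu)(B\cdot\mathbf a)\,B_i,\qquad d=\sqrt{-1}\,\omega-\vec e_3,
\end{equation*}
and since $3\lambda+2\mu>0$ forces $\lambda+\mu>\mu/3>0$, the rows of $\widetilde{\mathbf C}(0)\nabla(e_N\mathbf a)$ are divergence free only when $B\cdot\mathbf a=0$. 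The lemma as stated allows an arbitrary $\mathbf a\in\C^3$, so your invocation of the Stroh condition is an added hypothesis. This is not really a defect of your argument, though: without $B\cdot\mathbf a=0$ the leading divergence is of size $N^2\psi_N e_N$, which contributes $O(1)$ to the $H^{-1}$ norm even after Hardy's inequality, so the claimed $N^{-1/2}$ rate would fail under the paper's route as well. The condition is implicitly required for the lemma to hold at all (it is the oscillating-decaying condition of \cite{tanuma2007stroh}), and your proof has the virtue of making this hidden hypothesis explicit.
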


\begin{proof}
The estimate \eqref{elliptic estimate for r_N} holds by using the standard elliptic regularity estimate of $\wt r_{N}$, Hardy's inequality for $\widetilde{u}_N$ and Lemma \ref{Lem of approx sol change of vari}. The detailed proof is the same as the one of \cite[Lemma 2]{brown2001recovering}, thus we refer the interested readers to \cite{brown2001recovering}.

\end{proof}

\subsection{Proof of Theorem \ref{Thm:elastic boundary determination}}\label{sec 4.2}

Let us consider the function $u_N$ with $\wt u_{N}=F^* u_{N}$ and define $\wt u_{N}:=\kappa^{-1/2}(\wt G_{N}+\wt r_{N})$, then $\wt u_{N}\in (H^1(\wt\Omega))^3$ is the solution of 
\begin{align}\label{elastic appro sol}
\nabla \cdot (\widetilde{\mathbf{C}}\nabla \wt u_{N})=0 \text{ in }\wt \Omega \quad\text{ with }\quad \wt u_{N}=\wt G	_{N} \text{ on }\p \wt \Omega.
\end{align}
Denote $f_N=u_N|_{\p \Omega}$.
From formula \eqref{DN identity}, one has
\begin{align}\label{int by parts formula 1}
\kappa^{-1}\int_{\p \wt \Omega} \Lambda_{\wt{\mathbf{C}}}\wt G_{N}\cdot \overline{\wt G_{N}}~dS = \int_{\widetilde{\Omega}}\widetilde{\mathbf{C}} \nabla \wt u_{N}:\nabla \overline{\wt u_{N}}~ dx.
\end{align}

Recall that $(\Pi,\mathcal H, \mathbb P)$ is a complete probability space, and $\{X_\alpha: \alpha\in \mathbb N^2\}$ is a countable family of independent complex Gaussian random variables $X_\alpha:\varpi\in\Pi~\mapsto~X_\alpha(\varpi)\in\C$ as in Section \ref{sec:3} such that \eqref{eqn:GRV} holds with standard expectation of a random variable defined by $\E X=\int_\Pi X d\PP$.
Let $\{\bold e_n:n\in \N\}$ be an orthonormal basis of  $(L^2(\p\Omega))^3$, then we define the noisy data for the isotropic elasticity system via the bilinear form 
\begin{align}\label{noisy bilinear form for elasticity}
\mathcal{N}_{\mathbf{C}}(f,g):=\int_{\p \Omega}\Lambda_{\mathbf{C}}f\cdot \overline{g}~ dS+\sum _{\alpha \in \N^2} (f|\bold e_{\alpha_1})(g|\bold e_{\alpha_2})X_\alpha,
\end{align}
for $f,g\in (H^{1/2}(\p \Omega))^3$, where $\alpha=(\alpha_1,\alpha_2)$ and $(W | w)=\int_{\p \Omega}W \cdot \overline{w} dS\in \C$, for any $W,w\in (L^2(\Omega))^3$.

Next, 
by change of variables, \eqref{int by parts formula 1}, Lemma \ref{Lem of approx sol change of vari}, Lemma \ref{Lem:elliptic estimate for elasticity} and the Cauchy-Schwarz inequality, the equation \eqref{noisy bilinear form for elasticity} yields that  
\begin{align}\label{noisy bilinear 2}
\notag \mathcal{N}_{\mathbf{C}}(f_{N}, f_{N} )
& = \kappa^{-1}\int_{\p \wt\Omega}\Lambda_{\wt{\mathbf{C}}}\wt G_{N}\cdot \overline{\wt G_{N}}~ dS+\sum _{\alpha \in \N^2} (f_{N}|\bold e_{\alpha_1})(\overline{f_{N}}|\bold e_{\alpha_2})  X_\alpha \\
&=  \notag  \sum_{i,j=1}^{3}Z_{ij}(P)a_{i}\overline{a_{j}} + \sum _{\alpha \in \N^2}  (f_N|\bold e_{\alpha_1})(\overline{f_N}|\bold e_{\alpha_2})   X_\alpha \\ 
&\quad + O\left(N^{-1/2}+\mathcal{E}(M)\right),
\end{align}
where $Z_{ij}$ is given by \eqref{eq:Z_ij} and $\mathcal{E}(M)$ is an error term given by 
$$
\mathcal{E}(M):=N^{1/2}\left(\int_{|y'|\leq N^{-1/2}}|\nabla'\phi(y'+p')-\nabla'\phi(p')|^2~dy'\right)^{1/2}.
$$


We then state the following proposition by replacing $L^2(\p \Omega)$ by $(L^{2}(\partial\Omega))^3$ in Lemma 2.3 of \cite{caro2017calderon}. 

\begin{proposition}\label{Prop auxilliary in elast}
	Let $\mathbf{C}$ be the isotropic elastic tensor and $\Omega$ be a bounded Lipschitz domain in $\R^3$. Then
 there is a complete probability space $(\Pi,\mathcal H, \mathbb P)$, and a countable family $\left\{X_\alpha : \alpha \in \N^2\right\}$ of independent complex random variables satisfying \eqref{eqn:GRV}. In addition, for any $f,g\in (L^2(\p \Omega))^3$, we have 
		\begin{align}\label{Parseval id}
		\E\left|\sum_{\alpha\in\N^2}(f|{\bold e}_{\alpha_1})(g|{\bold e}_{\alpha_2})X_\alpha\right|^2=\|f\|^2_{(L^{2}(\partial\Omega))^3}\|g\|^2_{(L^{2}(\partial\Omega))^3}.
		\end{align}
		
Furthermore, the corrupted data 
		$$
		\mathcal{N}_{\mathbf{C}}: (H^{1/2}(\p \Omega ))^3\times (H^{1/2}(\p \Omega))^3\to L^2(\Pi, \mathcal H, \mathbb P)
		$$
		and the following estimate holds 
		\begin{align}\label{H^1/2 bound for elasticity}
		\E\left|\mathcal{N}_{\mathbf{C}}(f,g)\right|^2 \leq C\left(1+\|\lambda\|_{L^\infty(\Omega)}+\|\mu \|_{L^\infty(\Omega)}\right)\|f\|_{(H^{1/2}(\p \Omega))^3}^2 \|g\|_{(H^{1/2}(\p \Omega))^3}^2,
		\end{align} 
		for any $f,g\in (H^{1/2}(\p \Omega))^3$, and for some constant $C>0$ depending on $\p \Omega$. In particular, \eqref{H^1/2 bound for elasticity} implies that $\left|\mathcal{N}_{\mathbf{C}}(f,g)\right|<\infty $ almost surely.

\end{proposition}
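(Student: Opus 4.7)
The plan is to mirror the proof of \cite[Lemma~2.3]{caro2017calderon} for the scalar Calder\'on problem, making only the substitutions required by the vector-valued setting and by the elastic Dirichlet-to-Neumann map. The existence of the probability space $(\Pi,\mathcal H,\mathbb P)$ carrying a countable family $\{X_\alpha\}_{\alpha\in\N^2}$ of independent complex Gaussians satisfying \eqref{eqn:GRV} is standard (take a countable product of copies of a standard complex Gaussian law on $\C$). Nothing about the elasticity system enters at this step, so it can be quoted directly.

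For the Parseval-type identity \eqref{Parseval id}, I would first truncate the series to a finite sum over $|\alpha_1|,|\alpha_2|\le N$, expand the modulus squared, and use the orthogonality relations $\E(X_\alpha\overline{X_\beta})=\delta_{\alpha\beta}$ and $\E(X_\alpha X_\beta)=0$ coming from independence together with \eqref{eqn:GRV}. The cross terms vanish and the diagonal part collapses, via two applications of Parseval in $(L^2(\p\Omega))^3$, to
\[
\sum_{\alpha\in\N^2}|(f|\bold e_{\alpha_1})|^2\,|(g|\bold e_{\alpha_2})|^2
=\|f\|^2_{(L^2(\p\Omega))^3}\|g\|^2_{(L^2(\p\Omega))^3}.
\]
The same computation on the differences of partial sums shows the Cauchy property in $L^2(\Pi)$, justifying the passage to the limit and defining $\sum_\alpha(f|\bold e_{\alpha_1})(g|\bold e_{\alpha_2})X_\alpha$ as an element of $L^2(\Pi,\mathcal H,\mathbb P)$.

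For the continuity estimate \eqref{H^1/2 bound for elasticity}, I split $\mathcal{N}_{\mathbf{C}}(f,g)$ into its deterministic part $\int_{\p\Omega}\Lambda_{\mathbf{C}}f\cdot\overline{g}\,dS$ and the random series. The random series is controlled by \eqref{Parseval id} together with the continuous embedding $(H^{1/2}(\p\Omega))^3\hookrightarrow(L^2(\p\Omega))^3$, giving an upper bound of the form $C\|f\|_{(H^{1/2}(\p\Omega))^3}^2\|g\|_{(H^{1/2}(\p\Omega))^3}^2$ with $C$ depending only on $\p\Omega$. The deterministic part is handled by the standard variational theory for \eqref{Lame system}: the bilinear form $\int_\Omega \mathbf{C}\nabla u:\nabla v\,dx$ is coercive on $(H^1_0(\Omega))^3$ under \eqref{strong convexity} thanks to Korn's inequality on Lipschitz domains, so the solution operator $f\mapsto u$ is bounded $(H^{1/2}(\p\Omega))^3\to (H^1(\Omega))^3$, and therefore $\Lambda_{\mathbf{C}}:(H^{1/2}(\p\Omega))^3\to (H^{-1/2}(\p\Omega))^3$ is bounded with operator norm controlled by $1+\|\lambda\|_{L^\infty(\Omega)}+\|\mu\|_{L^\infty(\Omega)}$. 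Combining the two contributions via Cauchy-Schwarz and the elementary inequality $(a+b)^2\lesssim a^2+b^2$ yields \eqref{H^1/2 bound for elasticity}, and the almost-sure finiteness of $\mathcal{N}_{\mathbf{C}}(f,g)$ then follows immediately from the fact that $L^2(\Pi)$-functions are finite almost everywhere.

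The only non-routine point I expect is keeping track of the explicit dependence of the DN-map bound on $\|\lambda\|_{L^\infty}+\|\mu\|_{L^\infty}$ and on the lower bounds in \eqref{strong convexity}, since the coercivity constant of the elastic form depends on those bounds through Korn's inequality; once this is written out carefully, the rest of the argument is a direct transcription of the scalar case in \cite{caro2017calderon} with $L^2(\p\Omega)$ replaced throughout by $(L^2(\p\Omega))^3$.
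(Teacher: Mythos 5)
Your proposal is correct and follows exactly the route the paper itself takes: the paper offers no written proof of this proposition beyond the remark that it is obtained ``by replacing $L^2(\partial\Omega)$ by $(L^2(\partial\Omega))^3$ in Lemma 2.3 of \cite{caro2017calderon}'', and your argument is precisely that transcription, with the standard well-posedness of the elastic Dirichlet problem (coercivity via strong convexity and Korn's inequality) supplying the boundedness of $\Lambda_{\mathbf C}$. The only caveat you already flag yourself --- tracking whether the $L^\infty$ norms of $\lambda,\mu$ enter to the first or second power in \eqref{H^1/2 bound for elasticity} --- is a bookkeeping issue present in the paper's own statement and does not affect the conclusion.
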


From \eqref{Parseval id}, one can obtain
\begin{align}\label{L2 bound f}
\E\left|\sum_{\alpha\in\N^2}(f_N|{\bold e}_{\alpha_1})(f_N|{\bold e}_{\alpha_2})X_\alpha\right|^2=\|f_N\|^4_{(L^{2}(\partial\Omega))^3}\leq C_{\partial \Omega} N^{-2},
\end{align}
for some constant $C_{\partial \Omega}>0$ depending only on the Lipschitz function $\phi$, where the last inequality comes from the definition of the oscillating boundary data. 

Under some suitable assumptions on the boundary $\p \wt \Omega$, the last term in \eqref{noisy bilinear 2} converges to zero as $N\to \infty$. Thus, the Lam\'e parameters $\lambda$ and $\mu$ at $P\in \p \Omega$ can be reconstructed from $\mathcal{N}_{\mathbf{C}}(f_N, f_N)$ and \eqref{L2 bound f} by taking $N\to \infty$.

 	
\begin{proof}[Proof of Theorem \ref{Thm:elastic boundary determination}]
	Following the argument of \cite{caro2017calderon} or of Section \ref{sec:3}, one can obtain the boundary determination as well as the rate of convergence for Lam\'e moduli, which finishes the proof of Theorem \ref{Thm:elastic boundary determination}. 
\end{proof}



\vskip 0.5cm
\noindent\textbf{Acknowledgement.}
P. Caro was supported by MTM2018, Severo Ochoa SEV-2017-0718 and BERC 2018-2021.
R.-Y. Lai is partially supported by NSF grant DMS-1714490. Y.-H. Lin is supported by the Academy of Finland, under the project number 309963. T. Zhou is supported by NSF grant DMS-1501049.

\bibliographystyle{alpha}
\bibliography{ref}

\end{document}